\theoremstyle{plain}
\newtheorem{theorem}{Theorem}
\newtheorem{corollary}{Corollary}
\theoremstyle{definition}
\theoremstyle{remark}
\newtheorem{remark}{Remark}
\numberwithin{equation}{section}
\begin{document}
\title[On Horadam quaternions by using matrix method]{On Horadam quaternions
by using matrix method}
\author{Elif TAN}
\address{Department of Mathematics, Ankara University, Science Faculty,
06100 Tandogan Ankara, Turkey.}
\email{etan@ankara.edu.tr}
\author{Ho-Hon Leung}
\address{Department of Mathematical Sciences, UAEU, Al-Ain, United Arab
Emirates}
\email{hohon.leung@uaeu.ac.ae}
\subjclass[2000]{ 11B39, 05A15, 11R52}
\keywords{Fibonacci quaternions, matrix method}

\begin{abstract}
In this paper, we give several matrix representations for the Horadam
quaternions. We derive several identities related to these quaternions by
using the matrix method. Since quaternion multiplication is not commutative,
some of our results are non-commutative analogues of the well known
identities for the Fibonacci-like integer sequences. Lastly, we derive some
binomial-sum identities for the Horadam quaternions as an application of the
matrix method.
\end{abstract}

\maketitle

\section{Introduction}

\label{section1}

The real quaternion algebra is defined by%
\begin{equation*}
H=\{a_{0}+a_{1}i+a_{2}j+a_{3}k:a_{l}\in 
\mathbb{R}
,l=\left\{ 0,1,2,3\right\} \},
\end{equation*}%
where the basis $\left\{ 1,i,j,k\right\} $ satisfies the following
multiplication rule:%
\begin{eqnarray*}
i^{2} &=&j^{2}=k^{2}=-1, \\
ij &=&-ji=k,\text{ }jk=-kj=i,\text{ }ki=-ik=j
\end{eqnarray*}

The addition and multiplication of two quaternions $\mathfrak{p}%
=a_{0}+a_{1}i+a_{2}j+a_{3}k$ and $\mathfrak{q}=b_{0}+b_{1}i+b_{2}j+b_{3}k$
are defined by%
\begin{equation*}
\mathfrak{p}+\mathfrak{q}:=\left( a_{0}+b_{0}\right) +\left(
a_{1}+b_{1}\right) i+\left( a_{2}+b_{2}\right) j+\left( a_{3}+b_{3}\right) k,
\end{equation*}%
\begin{eqnarray*}
\mathfrak{p}\mathfrak{q}&:=&a_{0}b_{0}-a_{1}b_{1}-a_{2}b_{2}-a_{3}b_{3} \\
&&+\left( a_{0}b_{1}+a_{1}b_{0}+a_{2}b_{3}-a_{3}b_{2}\right) i \\
&&+\left( a_{0}b_{2}+a_{2}b_{0}+a_{3}b_{1}-a_{1}b_{3}\right) j \\
&&+\left( a_{0}b_{3}+a_{3}b_{0}+a_{1}b_{2}-a_{2}b_{1}\right) k.\text{ \ \ \
\ \ \ \ \ \ \ \ \ \ \ \ \ \ \ \ \ \ \ \ \ }
\end{eqnarray*}

The multiplication of a quaternion by the real scalar $c$ is defined as%
\begin{equation*}
c\mathfrak{p}:=ca_{0}+ca_{1}i+ca_{2}j+ca_{3}k,
\end{equation*}

and the norm of a quaternion $\mathfrak{q}$ is defined by%
\begin{equation*}
N\left( \mathfrak{q}\right) :=\mathfrak{q}\overline{\mathfrak{q}}%
=a_{0}^{2}+a_{1}^{2}+a_{2}^{2}+a_{3}^{2}\text{ \ \ \ \ \ \ \ \ \ \ \ \ \ \ \
\ \ \ \ \ \ \ \ \ \ \ \ \ \ \ \ \ \ \ }
\end{equation*}%
where $\overline{\mathfrak{q}}=a_{0}-a_{1}i-a_{2}j-a_{3}k$ is the conjugate
of a generalized quaternion $\mathfrak{q}$. We note that, in general, $%
\overline{\mathfrak{p}}$ $\overline{\mathfrak{q}}\neq \overline{\mathfrak{p}%
\mathfrak{q}}$. Hamilton's book \cite{hamilton} serves as an excellent
reference to the properties of quaternions.

There has been increasing interest on Fibonacci-type quaternions which are
defined by using special integer sequences such as Fibonacci, Lucas, Pell,
Jacobsthal sequences, etc. The properties of these quaternion sequences have
been extensively studied by several researchers. In particular, Horadam \cite%
{horadam1} defined the Fibonacci quaternions and Lucas quaternions over the
real quaternion algebra $H$ as%
\begin{equation*}
Q_{n}=F_{n}+F_{n+1}i+F_{n+2}j+F_{n+3}k
\end{equation*}%
and%
\begin{equation*}
K_{n}=L_{n}+L_{n+1}i+L_{n+2}j+L_{n+3}k,
\end{equation*}%
where $F_{n}$ is the $n$-th Fibonacci number defined by the recurrence
relation $F_{n}=F_{n-1}+F_{n-2},\ \ n\geq 2$ with the initial conditions $%
F_{0}=0,F_{1}=1$ and $L_{n}$ is the $n$-th Lucas number which satisfies the
same recurrence relation as Fibonacci numbers but begins with the initial
conditions $L_{0}=2,L_{1}=1$. Hal\i c\i\ and Karatas \cite{halici}
generalized the Fibonacci quaternions to the Horadam quaternions as%
\begin{equation}
W_{n}=w_{n}+w_{n+1}i+w_{n+2}j+w_{n+3}k,  \label{h}
\end{equation}%
where $\left\{ w_{n}\right\} :=\left\{ w_{n}\left( w_{0},w_{1};p,q\right)
\right\} $ is the $n$th Horadam number \cite{horadam} defined by%
\begin{equation*}
w_{n}=pw_{n-1}+qw_{n-2},\text{ \ \ }n\geq 2
\end{equation*}%
with initial conditions $w_{0}$ and $w_{1}$. We note that $\left\{
w_{n}\left( 0,1;p,q\right) \right\} =\left\{ u_{n}\right\} $ is the $\left(
p,q\right) $-Fibonacci sequence and $\left\{ w_{n}\left( 2,p;p,q\right)
\right\} =\left\{ v_{n}\right\} $ is the $\left( p,q\right) $-Lucas
sequence. Similarly, if we take the initial conditions $w_{0}=0$ and $%
w_{1}=1 $, the equation (\ref{h}) is reduced to the $\left( p,q\right) $%
-Fibonacci quaternions \cite{ipek}:%
\begin{equation}
U_{n}=u_{n}+u_{n+1}i+u_{n+2}j+u_{n+3}k,  \label{u}
\end{equation}%
and if we take $w_{0}=2$ and $w_{1}=p,$ it gives the $\left( p,q\right) $%
-Lucas quaternions \cite{patel}:%
\begin{equation}
V_{n}=v_{n}+v_{n+1}i+v_{n+2}j+v_{n+3}k,  \label{v}
\end{equation}%
where $u_{n}$ and $v_{n}$ are the $n$th $\left( p,q\right) $-Fibonacci
quaternions and $\left( p,q\right) $-Lucas quaternions respectively.
Morales's paper \cite{morales1} serves as an excellent reference for the
properties of the $\left( p,q\right) $-Fibonacci quaternions and $\left(
p,q\right) $-Lucas quaternions. Also, the Horadam quaternions for negative
subscripts can be defined by%
\begin{equation*}
W_{-n}:=w_{-n}+w_{-n+1}i+w_{-n+2}j+w_{-n+3}k.
\end{equation*}%
For negative subscripts, we note that $\left( -q\right)
^{n}w_{-n}=w_{0}u_{n+1}-w_{1}u_{n}.$

On the other hand, the matrix method is also very useful to obtain some
algebraic properties of the Fibonacci numbers and Fibonacci quaternions. In
particular, the Cassini's identity can easily be obtained by computing the
determinant of the Fibonacci quaternion matrix $%
\begin{pmatrix}
Q_{2} & Q_{1} \\ 
Q_{1} & Q_{0}%
\end{pmatrix}%
$ which was first defined by Halici \cite{halici0}. Similar to the Fibonacci
quaternion matrix, Szynal-Liana and Wloch \cite{szynal1, szynal2} gave
matrix representations for the Pell quaternions and Jacobsthal quaternions.
More generally, Patel and Ray \cite{patel} introduced the $\left( p,q\right) 
$-Fibonacci quaternion matrix as follows:%
\begin{equation}
\mathbb{U}:=%
\begin{pmatrix}
U_{2} & qU_{1} \\ 
U_{1} & qU_{0}%
\end{pmatrix}%
\Rightarrow \mathbb{U}\mathbb{A}^{n-1}=%
\begin{pmatrix}
U_{n+1} & qU_{n} \\ 
U_{n} & qU_{n-1}%
\end{pmatrix}
\label{1}
\end{equation}%
where the matrix $\mathbb{A}$ satisfies the following matrix relation:%
\begin{equation}
\mathbb{A}:=%
\begin{pmatrix}
p & q \\ 
1 & 0%
\end{pmatrix}%
\Rightarrow \mathbb{A}^{n}=%
\begin{pmatrix}
u_{n+1} & qu_{n} \\ 
u_{n} & qu_{n-1}%
\end{pmatrix}%
.  \label{2}
\end{equation}%
Recently, Bitim \cite{bitim} introduced several new quaternion matrices and
derived some identities of Fibonacci and Lucas quaternions by using these
matrices. For details related to the matrix $\mathbb{A}$, see \cite{gould,
melham}. For additional references related to the Fibonacci-type matrices
and Fibonacci-type quaternion matrices, see the papers \cite{johnson,
morales, tan, t1, t2, t}.

It is well known that the quaternion multiplication is non-commutative. But
it is interesting that some results related to this property was obtained
incorrectly. Some of these were pointed out in our former work \cite{chaos1}%
. One of the main advantages of using the Fibonacci-type quaternion matrices
is that it prevents such kind of mathematical errors which may have occurred
in some published papers. Since the quaternion multiplication is
non-commutative, we get different results when we compute the determinant by
expanding along different rows or columns. By taking extra care of the
quaternion multiplication, we get correct results based on the determinant
properties of the quaternion matrices. It was also pointed out by Alves \cite%
{alves}.

This paper is organized as follows: In Section \ref{section2}, we derive
some matrix representations of the Horadam quaternions which generalize the
former Fibonacci-like quaternion matrices that are mentioned above. In
particular, we get two matrix identities, (\ref{4}) and (\ref{5}), to
generate the Horadam quaternions. Both of the identities can be used to
obtain some properties of the Horadam quaternions. The first matrix identity
(\ref{4}) is obtained by the product of the matrices whose entries are $%
\left( p,q\right) $-Fibonacci quaternions and Horadam numbers respectively.
The second matrix identity (\ref{5}) is obtained by the product of the
matrices whose entries are Horadam quaternions and $\left( p,q\right) $%
-Fibonacci numbers respectively. We derive several identities related to
these quaternions by using the matrix method. In Section \ref{section3}, we
obtain some binomial-sum identities for the Horadam quaternions as an
immediate application of the matrix method used in the previous section. Our
results generalize the identities derived by Halici \cite{halici0}, Ipek 
\cite{ipek}, Kesim and Polatli \cite{kesim}.

\section{Main results}

\label{section2}

It is known that, for the Horadam sequence $\left\{ w_{n}\right\} ,$ we have
the matrix relation 
\begin{equation}
\mathbb{T}:=%
\begin{pmatrix}
w_{2} & qw_{1} \\ 
w_{1} & qw_{0}%
\end{pmatrix}%
\Rightarrow \mathbb{T}\mathbb{A}^{n-1}=%
\begin{pmatrix}
w_{n+1} & qw_{n} \\ 
w_{n} & qw_{n-1}%
\end{pmatrix}%
.  \label{3}
\end{equation}

Considering the matrix equalities in (\ref{1}) and (\ref{3}), we have a
matrix representation of the Horadam quaternions as follows:

For $n\geq 1,$ we have%
\begin{equation}
\left( \mathbb{T}\mathbb{A}^{n-1}\right) \mathbb{U}=\mathbb{U}\left( \mathbb{%
T}\mathbb{A}^{n-1}\right) =%
\begin{pmatrix}
W_{n+2} & qW_{n+1} \\ 
W_{n+1} & qW_{n}%
\end{pmatrix}%
.  \label{4}
\end{equation}

On the other hand, by using induction we have another matrix representation
for the Horadam quaternions as follows:

For $n\geq 1,$ we have%
\begin{equation}
\mathbb{W}:=%
\begin{pmatrix}
W_{2} & qW_{1} \\ 
W_{1} & qW_{0}%
\end{pmatrix}%
\Rightarrow \mathbb{A}^{n}\mathbb{W}=\mathbb{W}\mathbb{A}^{n}=%
\begin{pmatrix}
W_{n+2} & qW_{n+1} \\ 
W_{n+1} & qW_{n}%
\end{pmatrix}%
.  \label{5}
\end{equation}

\begin{remark}
For $\{w_{n}(0,1;1,1)\}$ and $\{w_{n}(2,1;1,1)\}$ in (\ref{5}), we obtain
the matrix representations of Fibonacci and Lucas quaternions \cite[Theorem
1, Theorem 2]{bitim} respectively. Thus, the results of this paper reduce to
the identities in Bitim's paper \cite{bitim}.
\end{remark}

The following theorem gives the \textit{Cassini identities} for Horadam
quaternions. If we take the determinant on both sides of the matrix equation
(\ref{5}) by expanding along the first row and second row, we get (\ref{*})
and (\ref{**}) respectively. Similarly, if we take the determinant on both
sides of the matrix equation in (\ref{4}) by expanding along the first row
and second row, we get (\ref{***}) and (\ref{****}) respectively.

\begin{theorem}
\label{t1} For a nonnegative integer $n$, we have%
\begin{align}
W_{n+1}W_{n-1}-W_{n}^{2}& =\left( -q\right) ^{n-1}\left(
W_{2}W_{0}-W_{1}^{2}\right) ,  \label{*} \\
W_{n-1}W_{n+1}-W_{n}^{2}& =\left( -q\right) ^{n-1}\left(
W_{0}W_{2}-W_{1}^{2}\right) ,  \label{**} \\
W_{n+1}W_{n-1}-W_{n}^{2}& =\left( -q\right) ^{n-1}\left(
U_{2}U_{0}-U_{1}^{2}\right) \left( w_{1}^{2}-pw_{1}w_{0}-qw_{0}^{2}\right) ,
\label{***} \\
W_{n-1}W_{n+1}-W_{n}^{2}& =\left( -q\right) ^{n-1}\left(
U_{0}U_{2}-U_{1}^{2}\right) \left( w_{1}^{2}-pw_{1}w_{0}-qw_{0}^{2}\right) .
\label{****}
\end{align}
\end{theorem}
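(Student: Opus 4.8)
The plan is to read off all four identities by taking the (noncommutative) determinant of the matrix equations (\ref{5}) and (\ref{4}) after relabelling $n\mapsto n-1$, so that in both cases the right-hand side is the matrix $\begin{pmatrix} W_{n+1} & qW_n \\ W_n & qW_{n-1}\end{pmatrix}$. For (\ref{*}) and (\ref{**}) I would start from (\ref{5}) in the shifted form $\mathbb{A}^{n-1}\mathbb{W}=\begin{pmatrix} W_{n+1} & qW_n \\ W_n & qW_{n-1}\end{pmatrix}$. The right-hand side is an explicit $2\times2$ matrix whose row expansion produces the Cassini combination up to the scalar factor $q$, while the left-hand side produces $\det(\mathbb{A}^{n-1})$ times the determinant of $\mathbb{W}$, where $\det\mathbb{A}=-q$ is read off from (\ref{2}). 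For (\ref{***}) and (\ref{****}) I would run the same argument on (\ref{4}) in the form $(\mathbb{T}\mathbb{A}^{n-2})\mathbb{U}=\begin{pmatrix} W_{n+1} & qW_n \\ W_n & qW_{n-1}\end{pmatrix}$, where now the scalar prefactor is $\det\mathbb{T}\,\det(\mathbb{A}^{n-2})$ and the quaternionic determinant is that of $\mathbb{U}$.

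The non-commutativity is then absorbed entirely into the choice of expansion. Expanding along the first row forms the diagonal product in the natural order, $W_{n+1}(qW_{n-1})-(qW_n)W_n=q\left(W_{n+1}W_{n-1}-W_n^2\right)$, and from $\mathbb{W}$ it gives $q\left(W_2W_0-W_1^2\right)$; cancelling the scalar $q$ yields (\ref{*}). Expanding along the second row forms the same diagonal product in the reversed order, $(qW_{n-1})W_{n+1}-W_n(qW_n)=q\left(W_{n-1}W_{n+1}-W_n^2\right)$, together with $q\left(W_0W_2-W_1^2\right)$ from $\mathbb{W}$, hence (\ref{**}). The analogous first-row and second-row expansions of $\mathbb{U}$ give $U_2U_0-U_1^2$ and $U_0U_2-U_1^2$, which is what distinguishes (\ref{***}) from (\ref{****}).

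The only genuine computation appears in the $\mathbb{T}$-factor for (\ref{***}) and (\ref{****}). From (\ref{3}) one has $\det\mathbb{T}=q\left(w_2w_0-w_1^2\right)$, and substituting $w_2=pw_1+qw_0$ rewrites $w_2w_0-w_1^2=-\left(w_1^2-pw_1w_0-qw_0^2\right)$, so that $\det\mathbb{T}=-q\left(w_1^2-pw_1w_0-qw_0^2\right)$. Combining this with $\det(\mathbb{A}^{n-2})=(-q)^{n-2}$ collapses the sign and power into the single scalar $(-q)^{n-1}\left(w_1^2-pw_1w_0-qw_0^2\right)$; multiplying by the $U$-determinant $q\left(U_2U_0-U_1^2\right)$ (or $q\left(U_0U_2-U_1^2\right)$) and cancelling $q$ delivers (\ref{***}) (resp. (\ref{****})).

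The step I expect to be the real obstacle is justifying that multiplication by the scalar matrices $\mathbb{A}$ and $\mathbb{T}$ scales the row-expansion determinant by their scalar determinants: this is \emph{not} automatic for quaternionic matrices, since cofactor expansion of a product introduces commutator terms that need not vanish. What makes it work here is that every matrix $\mathbb{A}^{k}\mathbb{W}$ retains the special structure in which the second column is $q$ times a shift of the first, so the obstructing commutators cancel through the recurrence $W_{k+1}=pW_k+qW_{k-1}$. Concretely, I would secure the argument by checking the one-step identity $W_{n+2}W_n-W_{n+1}^2=-q\left(W_{n+1}W_{n-1}-W_n^2\right)$ directly from $W_{n+2}=pW_{n+1}+qW_n$ and $W_{n+1}=pW_n+qW_{n-1}$, and then iterating; this both legitimises the factor $(-q)^{n-1}$ and shows that the determinant manipulation is valid despite non-commutativity. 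The remaining ingredients, $\det\mathbb{A}=-q$, $\det\mathbb{T}=q(w_2w_0-w_1^2)$, and the index shift, are elementary.
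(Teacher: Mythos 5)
Your proposal is correct and is essentially the paper's own argument: the paper ``proves'' Theorem \ref{t1} entirely in the sentence preceding its statement, by taking the determinant of (\ref{5}) along the first and second rows to get (\ref{*}) and (\ref{**}), and of (\ref{4}) likewise to get (\ref{***}) and (\ref{****}). You are in fact more careful than the paper, since you correctly flag that the row-expansion determinant of a quaternionic matrix product is not automatically multiplicative and repair this via the one-step identity $W_{n+2}W_{n}-W_{n+1}^{2}=-q\left(W_{n+1}W_{n-1}-W_{n}^{2}\right)$, which follows directly from the recurrence and iterates to the stated power of $-q$.
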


\begin{remark}
We note that the terms on the right hand side of the equations (\ref{***})
and (\ref{****}) can be expressed as%
\begin{align*}
U_{2}U_{0}-U_{1}^{2}& =-\left( -1+q-q^{2}+q^{3}+V_{0}-pq\left(
qi+p\allowbreak j-k\right) \right) , \\
U_{0}U_{2}-U_{1}^{2}& =-\left( -1+q-q^{2}+q^{3}+V_{0}+pq\left(
qi+p\allowbreak j-k\right) \right)
\end{align*}%
(see \cite{morales1}).
\end{remark}

\begin{theorem}
\label{t2} For integers $m,n\geq 1,$ the following equalities hold:%
\begin{equation}
w_{n}U_{m+1}+qw_{n-1}U_{m}=W_{n+m},\text{ \ \ \ \ \ \ \ \ \ \ \ \ \ \ \ \ \
\ \ \ \ }  \label{t21}
\end{equation}%
\begin{equation}
u_{n}W_{m+1}+qu_{n-1}W_{m}=W_{n+m},\text{ \ \ \ \ \ \ \ \ \ \ \ \ \ \ \ \ \
\ \ \ \ }  \label{t22}
\end{equation}%
\begin{equation}
W_{m+1}U_{n+1}+qW_{m}U_{n}=U_{2}W_{m+n}+qU_{1}W_{m+n-1},  \label{t23}
\end{equation}%
\begin{equation}
W_{m+1}W_{n+1}+qW_{m}W_{n}=W_{2}W_{m+n}+qW_{1}W_{m+n-1}.  \label{t24}
\end{equation}
\end{theorem}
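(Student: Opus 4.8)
The plan is to obtain each of the four identities as a single entry of a $2\times 2$ matrix product evaluated in two ways, exactly in the spirit of the Cassini identities of Theorem~\ref{t1}. The one structural fact used repeatedly is that the scalar matrix $\mathbb{A}$ commutes with the quaternion matrices $\mathbb{U}$ and $\mathbb{W}$; since $\mathbb{U}$ is the special case $w_{0}=0,\,w_{1}=1$ of $\mathbb{W}$, both $\mathbb{A}^{k}\mathbb{U}=\mathbb{U}\mathbb{A}^{k}$ and $\mathbb{A}^{k}\mathbb{W}=\mathbb{W}\mathbb{A}^{k}$ are instances of~(\ref{5}).

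For (\ref{t21}) I would form $\left( \mathbb{T}\mathbb{A}^{n-1}\right) \left( \mathbb{U}\mathbb{A}^{m-1}\right) $. By (\ref{3}) and (\ref{1}) its $(2,1)$ entry is $w_{n}U_{m+1}+qw_{n-1}U_{m}$. On the other hand, commuting the scalar power of $\mathbb{A}$ through $\mathbb{U}$ gives $\left( \mathbb{T}\mathbb{A}^{n-1}\right) \left( \mathbb{U}\mathbb{A}^{m-1}\right) =\left( \mathbb{T}\mathbb{A}^{n+m-2}\right) \mathbb{U}$, whose $(2,1)$ entry equals $W_{n+m}$ by (\ref{4}) (with $n$ replaced by $n+m-1$); equating the two proves (\ref{t21}). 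Identity (\ref{t22}) is handled identically using $\mathbb{A}^{n-1}\left( \mathbb{A}^{m-1}\mathbb{W}\right) =\mathbb{A}^{n+m-2}\mathbb{W}$: by (\ref{2}) and (\ref{5}) the $(1,1)$ entry of the left grouping is $u_{n}W_{m+1}+qu_{n-1}W_{m}$, while (\ref{5}) shows the right-hand side has $(1,1)$ entry $W_{n+m}$.

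The same device gives (\ref{t24}) through $\left( \mathbb{W}\mathbb{A}^{m-1}\right) \left( \mathbb{W}\mathbb{A}^{n-1}\right) =\mathbb{W}^{2}\mathbb{A}^{m+n-2}=\mathbb{W}\left( \mathbb{W}\mathbb{A}^{m+n-2}\right) $, where the middle step uses only $\mathbb{A}^{m-1}\mathbb{W}=\mathbb{W}\mathbb{A}^{m-1}$. The left grouping has $(1,1)$ entry $W_{m+1}W_{n+1}+qW_{m}W_{n}$ and the right grouping has $(1,1)$ entry $W_{2}W_{m+n}+qW_{1}W_{m+n-1}$, so (\ref{t24}) follows with no commutativity of quaternion matrices required. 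Identity (\ref{t23}) has the same shape: its left side is the $(1,1)$ entry of $\left( \mathbb{W}\mathbb{A}^{m-1}\right) \left( \mathbb{U}\mathbb{A}^{n-1}\right) =\mathbb{W}\mathbb{U}\mathbb{A}^{m+n-2}$, and its right side is the $(1,1)$ entry of $\mathbb{U}\left( \mathbb{W}\mathbb{A}^{m+n-2}\right) =\mathbb{U}\mathbb{W}\mathbb{A}^{m+n-2}$.

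Thus the only genuine obstacle is the commutation $\mathbb{W}\mathbb{U}=\mathbb{U}\mathbb{W}$ demanded by (\ref{t23}). This cannot be taken for granted: two quaternion matrices that each commute with $\mathbb{A}$ need not commute with one another, so it must be extracted from the specific structure. I would argue as follows. Comparing (\ref{4}) and (\ref{5}) at $n=1$ gives $\mathbb{T}\mathbb{U}=\mathbb{W}\mathbb{A}$, while (\ref{4}) at $n=1$ also gives $\mathbb{T}\mathbb{U}=\mathbb{U}\mathbb{T}$. Hence, using $\mathbb{U}\mathbb{A}=\mathbb{A}\mathbb{U}$, one has $\mathbb{W}\mathbb{U}\mathbb{A}=\left( \mathbb{W}\mathbb{A}\right) \mathbb{U}=\mathbb{T}\mathbb{U}^{2}$ and $\mathbb{U}\mathbb{W}\mathbb{A}=\mathbb{U}\left( \mathbb{W}\mathbb{A}\right) =\mathbb{U}\mathbb{T}\mathbb{U}=\mathbb{T}\mathbb{U}^{2}$, so $\mathbb{W}\mathbb{U}\mathbb{A}=\mathbb{U}\mathbb{W}\mathbb{A}$. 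As $\det \mathbb{A}=-q\neq 0$, the matrix $\mathbb{A}$ is invertible and may be cancelled, giving $\mathbb{W}\mathbb{U}=\mathbb{U}\mathbb{W}$; substituting this equality into the two products above equates their $(1,1)$ entries and yields (\ref{t23}).
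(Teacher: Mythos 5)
Your proposal is correct, and for three of the four identities it coincides with the paper's proof: identities (\ref{t21}), (\ref{t22}) and (\ref{t24}) are obtained there from the same products, evaluated in two associativity orders, using only the commutation of the single quaternion matrix $\mathbb{U}$ (resp.\ $\mathbb{W}$) with the real matrices $\mathbb{A}^{k}$, exactly as you do. Where you genuinely diverge is (\ref{t23}). The paper factors the relevant product as $\mathbb{U}\left(\mathbb{T}\mathbb{A}^{m+n-3}\mathbb{U}\right)=\left(\mathbb{U}\mathbb{T}\mathbb{A}^{m-2}\right)\left(\mathbb{U}\mathbb{A}^{n-1}\right)$, so that the only quaternion matrix ever moved past anything is $\mathbb{U}$, and it is only moved past the real matrices $\mathbb{T}\mathbb{A}^{k}$ --- precisely what (\ref{4}) licenses; no commutation of two quaternion matrices is ever needed. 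You instead compare $\left(\mathbb{W}\mathbb{A}^{m-1}\right)\left(\mathbb{U}\mathbb{A}^{n-1}\right)=\mathbb{W}\mathbb{U}\mathbb{A}^{m+n-2}$ with $\mathbb{U}\left(\mathbb{W}\mathbb{A}^{m+n-2}\right)=\mathbb{U}\mathbb{W}\mathbb{A}^{m+n-2}$, which forces you to establish $\mathbb{W}\mathbb{U}=\mathbb{U}\mathbb{W}$. You correctly identify this as the crux rather than assuming it, and your derivation is sound: $\mathbb{W}\mathbb{A}=\mathbb{T}\mathbb{U}=\mathbb{U}\mathbb{T}$ from (\ref{4}) and (\ref{5}) at $n=1$ gives $\mathbb{W}\mathbb{U}\mathbb{A}=\mathbb{T}\mathbb{U}^{2}=\mathbb{U}\mathbb{W}\mathbb{A}$, and the real matrix $\mathbb{A}$ cancels since $\det\mathbb{A}=-q\neq 0$ (the paper's standing assumption, which its own proof also needs, e.g.\ for $\mathbb{A}^{m-2}$ when $m=1$). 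What your route buys is the explicit lemma $\mathbb{U}\mathbb{W}=\mathbb{W}\mathbb{U}$, a nontrivial commutation of quaternion matrices that the paper never states and that is of some independent interest; what the paper's route buys is economy, since it never has to confront a product of two distinct quaternion matrices at all.
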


\begin{proof}
By the matrix equality $\left( \mathbb{T}\mathbb{A}^{m+n-2}\right) \mathbb{U}%
=\left( \mathbb{T}\mathbb{A}^{n-1}\right) \left( \mathbb{A}^{m-1}\mathbb{U}%
\right) $ and the matrix equations (\ref{1}), (\ref{4}), we get%
\begin{equation*}
\begin{pmatrix}
W_{m+n+1} & qW_{m+n} \\ 
W_{m+n} & qW_{m+n-1}%
\end{pmatrix}%
=%
\begin{pmatrix}
w_{n+1} & qw_{n} \\ 
w_{n} & qw_{n-1}%
\end{pmatrix}%
\begin{pmatrix}
U_{m+1} & qU_{m} \\ 
U_{m} & qU_{m-1}%
\end{pmatrix}%
.
\end{equation*}%
Similarly, by the matrix equality $\mathbb{W}\mathbb{A}^{m+n-2}=\left( 
\mathbb{W}\mathbb{A}^{m-1}\right) \mathbb{A}^{n-1}$ and (\ref{5}), we get%
\begin{equation*}
\begin{pmatrix}
W_{m+n} & qW_{m+n-1} \\ 
W_{m+n-1} & qW_{m+n-2}%
\end{pmatrix}%
=%
\begin{pmatrix}
W_{m+1} & qW_{m} \\ 
W_{m} & qW_{m-1}%
\end{pmatrix}%
\begin{pmatrix}
u_{n} & qu_{n-1} \\ 
u_{n-1} & qu_{n-2}%
\end{pmatrix}%
.
\end{equation*}%
By comparing the corresponding entries in the matrices on both sides of the
equations, we get the results (\ref{t21}) and (\ref{t22}).

By the matrix equality $\mathbb{U}\left( \mathbb{T}\mathbb{A}^{m+n-3}\mathbb{%
U}\right) =\left( \mathbb{U}\mathbb{T}\mathbb{A}^{m-2}\right) \left( \mathbb{%
U}\mathbb{A}^{n-1}\right) $ and the matrix equations (\ref{1}), (\ref{4}),
we get 
\begin{equation*}
\begin{pmatrix}
U_{2} & qU_{1} \\ 
U_{1} & qU_{0}%
\end{pmatrix}%
\begin{pmatrix}
W_{m+n} & qW_{m+n-1} \\ 
W_{m+n-1} & qW_{m+n-2}%
\end{pmatrix}%
=%
\begin{pmatrix}
W_{m+1} & qW_{m} \\ 
W_{m} & qW_{m-1}%
\end{pmatrix}%
\begin{pmatrix}
U_{n+1} & qU_{n} \\ 
U_{n} & qU_{n-1}%
\end{pmatrix}%
.
\end{equation*}%
Similarly, by the matrix equality $\mathbb{W}\left( \mathbb{A}^{m+n-2}%
\mathbb{W}\right) =\left( \mathbb{W}\mathbb{A}^{m-1}\right) \left( \mathbb{W}%
\mathbb{A}^{n-1}\right) $ and (\ref{5}), we get 
\begin{equation*}
\begin{pmatrix}
W_{2} & qW_{1} \\ 
W_{1} & qW_{0}%
\end{pmatrix}%
\begin{pmatrix}
W_{m+n} & qW_{m+n-1} \\ 
W_{m+n-1} & qW_{m+n-2}%
\end{pmatrix}%
=%
\begin{pmatrix}
W_{m+1} & qW_{m} \\ 
W_{m} & qW_{m-1}%
\end{pmatrix}%
\begin{pmatrix}
W_{n+1} & qW_{n} \\ 
W_{n} & qW_{n-1}%
\end{pmatrix}%
.
\end{equation*}%
By comparing the corresponding entries in the matrices on both sides of the
equation, we get the result (\ref{t23}) and (\ref{t24}).
\end{proof}

We note that, although the first matrix identity (\ref{4}) can be used to
obtain identities involving both $\left( p,q\right) $-Fibonacci quaternions
and Horadam quaternions, the second matrix identity (\ref{5}) gives
identities involving only Horadam quaternions.

As corollaries, we state two quadratic identities for Horadam quaternions
below. 

\begin{corollary}
\label{corollary0.5} Let $n$ be a positive integer. The following equality
holds: 
\begin{equation*}
W_{n+1}^2 + q W_n^2 = W_1 W_{2n+1} + q W_0 W_{2n}.
\end{equation*}
\end{corollary}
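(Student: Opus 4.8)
The plan is to evaluate the triple matrix product $\mathbb{W}\mathbb{A}^{2n-1}\mathbb{W}$ in two different ways and then compare a single entry. Because the Horadam quaternions form an associative (though non-commutative) ring, multiplication of quaternion-valued matrices is associative, so the two groupings below produce the same matrix and hence the same $(2,1)$ entry. The whole argument rests on the identity (\ref{5}), applied at three different indices.

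First I would group the product as $\mathbb{W}\,\bigl(\mathbb{A}^{2n-1}\mathbb{W}\bigr)$. Applying (\ref{5}) with index $2n-1$ gives
\[
\mathbb{A}^{2n-1}\mathbb{W}=\begin{pmatrix} W_{2n+1} & qW_{2n} \\ W_{2n} & qW_{2n-1}\end{pmatrix},
\]
and since the second row of $\mathbb{W}$ is $(W_1,\,qW_0)$, the row-by-column rule yields the $(2,1)$ entry $W_1W_{2n+1}+qW_0W_{2n}$, which is exactly the right-hand side of the claimed identity. Here I must keep the quaternion factors in the left-to-right order dictated by matrix multiplication and use that $q$ is a real scalar, so that $(qW_0)W_{2n}=qW_0W_{2n}$.

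Next I would group the same product as $\bigl(\mathbb{W}\mathbb{A}^{n}\bigr)\bigl(\mathbb{A}^{n-1}\mathbb{W}\bigr)$, which is legitimate since $\mathbb{A}^{n}\mathbb{A}^{n-1}=\mathbb{A}^{2n-1}$. Applying (\ref{5}) to each factor, the second row of $\mathbb{W}\mathbb{A}^{n}$ is $(W_{n+1},\,qW_n)$ and the first column of $\mathbb{A}^{n-1}\mathbb{W}$ is $(W_{n+1},\,W_n)^{\mathsf T}$, so the $(2,1)$ entry of this grouping equals $W_{n+1}W_{n+1}+qW_nW_n=W_{n+1}^2+qW_n^2$, the left-hand side. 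Equating the two expressions for the same $(2,1)$ entry gives the corollary. For $n=1$ one reads $\mathbb{A}^{n-1}=\mathbb{A}^{0}=I$, consistent with (\ref{5}) at index $0$, so the argument covers all $n\ge 1$.

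The computation is routine once the product $\mathbb{W}\mathbb{A}^{2n-1}\mathbb{W}$ and the target entry are chosen correctly; the one genuine point of care, and the natural place to make a mistake, is the bookkeeping of the non-commutative order of the quaternion multiplications in each entry. This is precisely the kind of error that the matrix method is designed to prevent, so organizing the proof around equating entries of a single associatively grouped product is what makes it clean.
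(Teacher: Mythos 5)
Your proof is correct, and it is a clean self-contained variant of what the paper does. The paper deduces the corollary from Theorem \ref{t2}, identity (\ref{t24}), with $m=n$ (which yields $W_{n+1}^{2}+qW_{n}^{2}=W_{2}W_{2n}+qW_{1}W_{2n-1}$) and then massages the right-hand side into the stated form using $W_{2}=pW_{1}+qW_{0}$ and the recurrence $pW_{2n}+qW_{2n-1}=W_{2n+1}$. You instead evaluate the triple product $\mathbb{W}\mathbb{A}^{2n-1}\mathbb{W}$ in two associative groupings; since (\ref{t24}) is itself proved in the paper by the same trick applied to $\mathbb{W}\mathbb{A}^{m+n-2}\mathbb{W}$, the only real difference is that your odd exponent $2n-1$ (rather than $2n-2$) makes the $(2,1)$ entry land directly on $W_{1}W_{2n+1}+qW_{0}W_{2n}$, eliminating the final recurrence manipulation and the dependence on Theorem \ref{t2}. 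All entry computations check out, the non-commutative order is handled correctly, and the $n=1$ boundary case is covered; both arguments ultimately rest on (\ref{5}) and the associativity of quaternion-valued matrix multiplication.
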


\begin{proof}
By Theorem \ref{t2}, we have 
\begin{align}
W_{n+1}^{2}+qW_{n}^{2}& =W_{2}W_{2n}+qW_{1}W_{2n-1}  \notag \\
& =\left( pW_{1}+qW_{0}\right) W_{2n}+qW_{1}W_{2n-1}  \notag \\
& =qW_{0}W_{2n}+W_{1}\left( pW_{2n}+qW_{2n-1}\right)  \notag \\
& =W_{1}W_{2n+1}+qW_{0}W_{2n}.  \notag
\end{align}
\end{proof}

\begin{corollary}
\label{corollary0.6} Let $n$ be a positive integer. The following equality
holds: 
\begin{equation*}
W_{n+1}^{2}-q^{2}W_{n-1}^{2}=p\left( W_{1}W_{2n}+qW_{0}W_{2n-1}\right) .
\end{equation*}
\end{corollary}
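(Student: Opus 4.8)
The plan is to derive the identity from the quadratic identity already established in Corollary~\ref{corollary0.5}, applied at two consecutive indices, rather than attempting to factor $W_{n+1}^{2}-q^{2}W_{n-1}^{2}$ directly. The latter is precisely the thing to avoid: because quaternion multiplication is non-commutative and $W_{n+1},W_{n-1}$ do not commute in general (their vector parts are not parallel), the factorization $W_{n+1}^{2}-q^{2}W_{n-1}^{2}=\left( W_{n+1}-qW_{n-1}\right) \left( W_{n+1}+qW_{n-1}\right) $ is invalid. Corollary~\ref{corollary0.5} sidesteps this by expressing a sum of squares as a sum of products in which the Horadam-quaternion factor of lowest index sits on the left, and I intend to exploit exactly this one-sided structure.

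First I would record the recurrence $W_{k+1}=pW_{k}+qW_{k-1}$, valid for every integer $k$, which holds since each component $w_{k+\ell }$ of $W_{k}$ obeys the Horadam recurrence; equivalently $W_{k+1}-qW_{k-1}=pW_{k}$. Next I would write down Corollary~\ref{corollary0.5} at index $n$,
\begin{equation*}
W_{n+1}^{2}+qW_{n}^{2}=W_{1}W_{2n+1}+qW_{0}W_{2n},
\end{equation*}
and at index $n-1$,
\begin{equation*}
W_{n}^{2}+qW_{n-1}^{2}=W_{1}W_{2n-1}+qW_{0}W_{2n-2}.
\end{equation*}
(The second instance is the trivial identity when $n=1$, so the argument is uniform for all $n\geq 1$.)

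Then I would form the combination (first)$\,-\,q\cdot$(second). On the left the $qW_{n}^{2}$ terms cancel, leaving exactly $W_{n+1}^{2}-q^{2}W_{n-1}^{2}$. On the right, keeping $W_{1}$ and $qW_{0}$ on the left of the surviving factors gives
\begin{equation*}
W_{1}\left( W_{2n+1}-qW_{2n-1}\right) +qW_{0}\left( W_{2n}-qW_{2n-2}\right) .
\end{equation*}
Applying the recurrence in the forms $W_{2n+1}-qW_{2n-1}=pW_{2n}$ and $W_{2n}-qW_{2n-2}=pW_{2n-1}$ turns this into $pW_{1}W_{2n}+pqW_{0}W_{2n-1}=p\left( W_{1}W_{2n}+qW_{0}W_{2n-1}\right) $, which is the claimed identity. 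I expect no genuine obstacle beyond the bookkeeping needed to keep the scalars $p,q$ and the leading factors $W_{1},W_{0}$ on a consistent (left) side throughout, since that is exactly what makes both the cancellation and the final collection of factors legitimate in the non-commutative setting.
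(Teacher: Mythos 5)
Your proof is correct, and while it starts from the same pivotal decomposition as the paper, namely $W_{n+1}^{2}-q^{2}W_{n-1}^{2}=\left(W_{n+1}^{2}+qW_{n}^{2}\right)-q\left(W_{n}^{2}+qW_{n-1}^{2}\right)$, it then takes a genuinely different route. The paper re-encodes each sum of squares as a matrix product $\begin{pmatrix}W_{1}&qW_{0}\end{pmatrix}\mathbb{A}^{2n}\begin{pmatrix}W_{1}\\W_{0}\end{pmatrix}$ (respectively with $\mathbb{A}^{2n-2}$), pulls out the common factor to get $\begin{pmatrix}W_{1}&qW_{0}\end{pmatrix}\mathbb{A}^{2n-2}\left(\mathbb{A}^{2}-qI\right)\begin{pmatrix}W_{1}\\W_{0}\end{pmatrix}$, and invokes the Cayley--Hamilton relation $\mathbb{A}^{2}-qI=p\mathbb{A}$. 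You instead quote Corollary \ref{corollary0.5} at indices $n$ and $n-1$ --- which is exactly the content of those two matrix identities, already established --- and finish with the scalar recurrence $W_{k+1}-qW_{k-1}=pW_{k}$ applied inside the right-hand side; that recurrence is precisely the entrywise shadow of the Cayley--Hamilton step. Your version is the more economical one: it reuses a proved corollary rather than redoing the matrix computation, and your care with non-commutativity (keeping $W_{1}$ and $qW_{0}$ on the left throughout, and rejecting the factorization $\left(W_{n+1}-qW_{n-1}\right)\left(W_{n+1}+qW_{n-1}\right)$, which fails for quaternions) is exactly right. What the paper's matrix formulation buys in exchange is uniformity --- the same template $\begin{pmatrix}W_{1}&qW_{0}\end{pmatrix}\mathbb{A}^{m}\begin{pmatrix}W_{1}\\W_{0}\end{pmatrix}$ drives the binomial-sum identities of Section \ref{section3} --- whereas your argument is tailored to this one identity. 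The only boundary point, $n=1$, where Corollary \ref{corollary0.5} would be needed at index $0$, you correctly dispose of by noting that the index-$0$ instance reduces to the tautology $W_{1}^{2}+qW_{0}^{2}=W_{1}W_{1}+qW_{0}W_{0}$.
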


\begin{proof}
First, we have 
\begin{equation}
W_{n+1}^{2}-q^{2}W_{n-1}^{2}=\left( W_{n+1}^{2}+qW_{n}^{2}\right) -q\left(
W_{n}^{2}+qW_{n-1}^{2}\right) .  \label{equation90}
\end{equation}%
Then, we do the following computations 
\begin{align}
W_{n+1}^{2}+qW_{n}^{2}& =%
\begin{pmatrix}
W_{n+1} & qW_{n}%
\end{pmatrix}%
\begin{pmatrix}
W_{n+1} \\ 
W_{n}%
\end{pmatrix}
\notag \\
& =%
\begin{pmatrix}
W_{1} & qW_{0}%
\end{pmatrix}%
\mathbb{A}^{n}\mathbb{A}^{n}%
\begin{pmatrix}
W_{1} \\ 
W_{0}%
\end{pmatrix}
\notag \\
& =%
\begin{pmatrix}
W_{1} & qW_{0}%
\end{pmatrix}%
\mathbb{A}^{2n}%
\begin{pmatrix}
W_{1} \\ 
W_{0}%
\end{pmatrix}%
.  \label{equation91}
\end{align}%
Similarly, we also have 
\begin{equation}
W_{n}^{2}+qW_{n-1}^{2}=%
\begin{pmatrix}
W_{1} & qW_{0}%
\end{pmatrix}%
\mathbb{A}^{2n-2}%
\begin{pmatrix}
W_{1} \\ 
W_{0}%
\end{pmatrix}%
.  \label{equation92}
\end{equation}%
By (\ref{equation90}), (\ref{equation91}) and (\ref{equation92}), we have 
\begin{align}
W_{n+1}^{2}-q^{2}W_{n-1}^{2}& =%
\begin{pmatrix}
W_{1} & qW_{0}%
\end{pmatrix}%
\left( \mathbb{A}^{2n}-q\mathbb{A}^{2n-2}\right) 
\begin{pmatrix}
W_{1} \\ 
W_{0}%
\end{pmatrix}
\notag \\
& =%
\begin{pmatrix}
W_{1} & qW_{0}%
\end{pmatrix}%
\mathbb{A}^{2n-2}\left( \mathbb{A}^{2}-qI\right) 
\begin{pmatrix}
W_{1} \\ 
W_{0}%
\end{pmatrix}
\notag \\
& =p%
\begin{pmatrix}
W_{1} & qW_{0}%
\end{pmatrix}%
\mathbb{A}^{2n-2}\mathbb{A}%
\begin{pmatrix}
W_{1} \\ 
W_{0}%
\end{pmatrix}
\label{equation93} \\
& =p\left( W_{1}W_{2n}+qW_{0}W_{2n-1}\right) .  \notag
\end{align}%
We note that the equality (\ref{equation93}) is due to the Cayley-Hamilton
theorem: 
\begin{equation*}
\mathbb{A}^{2}-p\mathbb{A}-qI=%
\begin{pmatrix}
0 & 0 \\ 
0 & 0%
\end{pmatrix}%
.
\end{equation*}
\end{proof}

By Theorem \ref{t2}, we have the following matrix equations for the Horadam
quaternions $W_{n}$: 
\begin{align}
\begin{pmatrix}
W_{n+m+1} \\ 
W_{n+m}%
\end{pmatrix}%
& =%
\begin{pmatrix}
w_{n+1} & qw_{n} \\ 
w_{n} & qw_{n-1}%
\end{pmatrix}%
\begin{pmatrix}
U_{m+1} \\ 
U_{m}%
\end{pmatrix}%
,  \label{equation100} \\
\begin{pmatrix}
W_{n+m} \\ 
W_{n}%
\end{pmatrix}%
& =%
\begin{pmatrix}
u_{m} & qu_{m-1} \\ 
0 & 1%
\end{pmatrix}%
\begin{pmatrix}
W_{n+1} \\ 
W_{n}%
\end{pmatrix}%
.  \label{equation101}
\end{align}

\noindent We look at another Horadam sequence $\{z_{n}\}:=%
\{w_{n}(z_{0},z_{1};p;q)\}$. That is, 
\begin{equation*}
z_{n}=pz_{n-1}+qz_{n-2},\quad n\geq 2
\end{equation*}%
with initial conditions $z_{0}$ and $z_{1}$. We get another set of Horadam
quaternions as follows: 
\begin{equation*}
Z_{n}=z_{n}+z_{n+1}i+z_{n+2}j+z_{n+3}k.
\end{equation*}%
It is clear that the Horadam quaternions $Z_{n}$ satisfy the same set of
matrix equations (\ref{equation100}) and (\ref{equation101}) by Theorem \ref%
{t2}. The following theorem is a generalization of the $Catalan$ $identity$
for Fibonacci sequence to the cases that involve two sets of Horadam
quaternions $W_{n}$ and $Z_{n}$.

\begin{theorem}
\label{theorem100} Let $n$, $r$, and $s$ be positive integers. Let $W_n$ and 
$Z_n$ be two sets of Horadam quaternions defined by using the Horadam
sequences $\{w_n\}=\{ w_n(w_0, w_1;p,q)\}$ and $\{ z_n\}= \{ w_n (z_0, z_1;
p,q)\}$ respectively. Then, 
\begin{equation*}
Z_{n+r} W_{n+s} - Z_n W_{n+r+s} =(-q)^n u_r \left( Z_1 W_s - Z_0
W_{s+1}\right).
\end{equation*}
\end{theorem}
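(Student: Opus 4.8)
The plan is to recast the identity as a single computation with $2\times2$ matrices whose entries are the \emph{scalar} $(p,q)$-Fibonacci numbers, so that the non-commutativity of the quaternion factors is confined to a harmless piece of bilinear bookkeeping. The starting point is the column-vector form of the transfer relation: taking the second column of (\ref{5}) and dividing by $q$ gives, for every Horadam-type sequence,
\begin{equation*}
\begin{pmatrix} W_{n+1} \\ W_n \end{pmatrix} = \mathbb{A}^{n} \begin{pmatrix} W_1 \\ W_0 \end{pmatrix}, \qquad \begin{pmatrix} Z_{n+1} \\ Z_n \end{pmatrix} = \mathbb{A}^{n} \begin{pmatrix} Z_1 \\ Z_0 \end{pmatrix},
\end{equation*}
where the entries of $\mathbb{A}^{n}$ are the scalars $u_{n+1},qu_{n},u_{n},qu_{n-1}$ and hence commute with all quaternions.

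Next I would produce the two ``gapped'' vectors that appear in the statement. Writing $B:=\begin{pmatrix} u_r & qu_{r-1} \\ 0 & 1 \end{pmatrix}$, a scalar matrix with $\det B=u_r$, the relation (\ref{equation101}) applied with gap $r$ at base $n$ and with gap $r$ at base $n+s$, followed by the column form above, gives
\begin{equation*}
\begin{pmatrix} Z_{n+r} \\ Z_n \end{pmatrix} = B\,\mathbb{A}^{n} \begin{pmatrix} Z_1 \\ Z_0 \end{pmatrix}, \qquad \begin{pmatrix} W_{n+r+s} \\ W_{n+s} \end{pmatrix} = B\,\mathbb{A}^{n+s} \begin{pmatrix} W_1 \\ W_0 \end{pmatrix}.
\end{equation*}
The crucial observation is that, with $J:=\begin{pmatrix} 0 & 1 \\ -1 & 0 \end{pmatrix}$, the left-hand side of the theorem is exactly the cross-product
\begin{equation*}
Z_{n+r}W_{n+s}-Z_{n}W_{n+r+s}= \begin{pmatrix} Z_{n+r} & Z_{n} \end{pmatrix} J \begin{pmatrix} W_{n+r+s} \\ W_{n+s} \end{pmatrix},
\end{equation*}
an expression that keeps every $Z$-factor to the left of every $W$-factor.

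Substituting the two factorizations and moving the scalar matrices freely past the quaternion entries, the left-hand side becomes $\begin{pmatrix} Z_1 & Z_0 \end{pmatrix}(\mathbb{A}^{n})^{T}B^{T}J\,B\,\mathbb{A}^{n+s}\begin{pmatrix} W_1 \\ W_0\end{pmatrix}$. I would then collapse the scalar core using the elementary fact $M^{T}JM=(\det M)\,J$, valid for any $2\times2$ matrix $M$: applied to $B$ it yields $B^{T}JB=u_r J$, and applied to $\mathbb{A}^{n}$ it yields $(\mathbb{A}^{n})^{T}J\,\mathbb{A}^{n}=(\det\mathbb{A})^{n}J=(-q)^{n}J$ since $\det\mathbb{A}=-q$. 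Hence the core equals $(-q)^{n}u_r\,J\,\mathbb{A}^{s}$, and combining $\mathbb{A}^{s}\begin{pmatrix} W_1 \\ W_0\end{pmatrix}=\begin{pmatrix} W_{s+1} \\ W_s\end{pmatrix}$ with $\begin{pmatrix} Z_1 & Z_0\end{pmatrix}J=\begin{pmatrix} -Z_0 & Z_1\end{pmatrix}$ produces exactly $(-q)^{n}u_r\left(Z_1W_s-Z_0W_{s+1}\right)$, the claimed right-hand side.

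The only genuine obstacle is the quaternion non-commutativity, and the entire design is meant to neutralize it: because $B$, $J$, and every power of $\mathbb{A}$ have real entries, they are central and may be shuffled at will, while the bilinear form $\begin{pmatrix} Z & Z\end{pmatrix}J\begin{pmatrix} W \\ W\end{pmatrix}$ never transposes a product $Z_aW_b$ into $W_bZ_a$. One need only check that the index ranges permit each use of (\ref{5}) and (\ref{equation101}) (all gaps and bases here are $\ge 1$) and that $q\neq0$ so that the ``divide by $q$'' step is legitimate; both are routine. As a sanity check, specializing to $Z=W$ and $r=s=1$ gives $W_{n+1}^{2}-W_{n}W_{n+2}=(-q)^{n}(W_1^{2}-W_0W_2)$, which is the Cassini identity (\ref{**}) after the shift $n\mapsto n-1$.
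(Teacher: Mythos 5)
Your proof is correct and is essentially the paper's own argument in a cleaner notation: the paper likewise factors $\begin{pmatrix} Z_{n+r} & Z_{n}\end{pmatrix}$ and the (sign-adjusted) column $\begin{pmatrix} W_{n+s} \\ -W_{n+r+s}\end{pmatrix}$ through real-entried matrices built from (\ref{equation101}) and powers of $\mathbb{A}^{T}$, and collapses the scalar core to $(-q)^{n}u_{r}$ times a power acting on the initial vectors, keeping every $Z$ to the left of every $W$. Your use of $J$ together with $M^{T}JM=(\det M)\,J$ simply packages the explicit cancellations the paper verifies by hand (e.g.\ $\bigl(\begin{smallmatrix} u_{r} & 0 \\ qu_{r-1} & 1\end{smallmatrix}\bigr)\bigl(\begin{smallmatrix} 1 & 0 \\ -qu_{r-1} & u_{r}\end{smallmatrix}\bigr)=u_{r}I$ and $\mathbb{A}^{T}\bigl(\begin{smallmatrix} 0 & -1 \\ -q & p\end{smallmatrix}\bigr)=-qI$) into one determinant identity.
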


\begin{proof}
By the equation (\ref{equation101}), we do the following computation:%
\begin{equation}
\begin{pmatrix}
Z_{n+r} & Z_{n}%
\end{pmatrix}%
=%
\begin{pmatrix}
Z_{n+1} & Z_{n}%
\end{pmatrix}%
\begin{pmatrix}
u_{r} & 0 \\ 
qu_{r-1} & 1%
\end{pmatrix}%
=%
\begin{pmatrix}
Z_{1} & Z_{0}%
\end{pmatrix}%
\begin{pmatrix}
p & 1 \\ 
q & 0%
\end{pmatrix}%
^{n}%
\begin{pmatrix}
u_{r} & 0 \\ 
qu_{r-1} & 1%
\end{pmatrix}%
,  \label{equation102}
\end{equation}%
\begin{equation}
\begin{pmatrix}
W_{n+s} \\ 
-W_{n+r+s}%
\end{pmatrix}%
=%
\begin{pmatrix}
1 & 0 \\ 
-qu_{r-1} & u_{r}%
\end{pmatrix}%
\begin{pmatrix}
W_{n+s} \\ 
-W_{n+s+1}%
\end{pmatrix}%
=%
\begin{pmatrix}
1 & 0 \\ 
-qu_{r-1} & u_{r}%
\end{pmatrix}%
\begin{pmatrix}
0 & -1 \\ 
-q & p%
\end{pmatrix}%
^{n+s}%
\begin{pmatrix}
W_{0} \\ 
-W_{1}%
\end{pmatrix}%
.  \label{equation103}
\end{equation}
We also note that 
\begin{align}
& 
\begin{pmatrix}
p & 1 \\ 
q & 0%
\end{pmatrix}%
^{n}%
\begin{pmatrix}
u_{r} & 0 \\ 
qu_{r-1} & 1%
\end{pmatrix}%
\begin{pmatrix}
1 & 0 \\ 
-qu_{r-1} & u_{r}%
\end{pmatrix}%
\begin{pmatrix}
0 & -1 \\ 
-q & p%
\end{pmatrix}%
^{n+s}  \notag \\
& =%
\begin{pmatrix}
p & 1 \\ 
q & 0%
\end{pmatrix}%
^{n}\left( u_{r}I\right) 
\begin{pmatrix}
0 & -1 \\ 
-q & p%
\end{pmatrix}%
^{n+s}  \notag \\
& =u_{r}%
\begin{pmatrix}
p & 1 \\ 
q & 0%
\end{pmatrix}%
^{n}%
\begin{pmatrix}
0 & -1 \\ 
-q & p%
\end{pmatrix}%
^{n}%
\begin{pmatrix}
0 & -1 \\ 
-q & p%
\end{pmatrix}%
^{s}  \notag \\
& =u_{r}\left( (-q)^{n}I\right) 
\begin{pmatrix}
0 & -1 \\ 
-q & p%
\end{pmatrix}%
^{s}=(-q)^{n}u_{r}%
\begin{pmatrix}
0 & -1 \\ 
-q & p%
\end{pmatrix}%
^{s}.  \label{equation104}
\end{align}%
By (\ref{equation102}), (\ref{equation103}) and (\ref{equation104}), we get 
\begin{align}
Z_{n+r}W_{n+s}-Z_{n}W_{n+r+s}& =%
\begin{pmatrix}
Z_{n+r} & Z_{n}%
\end{pmatrix}%
\begin{pmatrix}
W_{n+s} \\ 
-W_{n+r+s}%
\end{pmatrix}
\notag \\
& =(-q)^{n}u_{r}%
\begin{pmatrix}
Z_{1} & Z_{0}%
\end{pmatrix}%
\begin{pmatrix}
0 & -1 \\ 
-q & p%
\end{pmatrix}%
^{s}%
\begin{pmatrix}
W_{0} \\ 
-W_{1}%
\end{pmatrix}
\label{equation105} \\
& =(-q)^{n}u_{r}%
\begin{pmatrix}
Z_{1} & Z_{0}%
\end{pmatrix}%
\begin{pmatrix}
W_{s} \\ 
-W_{s+1}%
\end{pmatrix}
\notag \\
& =(-q)^{n}u_{r}\left( Z_{1}W_{s}-Z_{0}W_{s+1}\right) .  \notag
\end{align}
\end{proof}

\begin{remark}
It is worthwhile to note that Tangboonduangjit and Thanatipanonda \cite[%
Proposition 1]{thotsaporn} proved the same identity as in Theorem \ref%
{theorem100} for integer sequences satisfying a general second-order
recurrence relation with constant coefficients.
\end{remark}

We have the following corollary immediately.

\begin{corollary}
\label{corollary1} For positive integers $n,r,$ and $s,$ we have%
\begin{eqnarray*}
U_{n+r}U_{n+s}-U_{n}U_{n+r+s} &=&(-q)^{n}u_{r}\left(
U_{1}U_{s}-U_{0}U_{s+1}\right) , \\
U_{n+r}W_{n+s}-U_{n}W_{n+r+s} &=&(-q)^{n}u_{r}\left(
U_{1}W_{s}-U_{0}W_{s+1}\right) , \\
W_{n+r}U_{n+s}-W_{n}U_{n+r+s} &=&(-q)^{n}u_{r}\left(
W_{1}U_{s}-W_{0}U_{s+1}\right) , \\
W_{n+r}W_{n+s}-W_{n}W_{n+r+s} &=&(-q)^{n}u_{r}\left(
W_{1}W_{s}-W_{0}W_{s+1}\right) .
\end{eqnarray*}
\end{corollary}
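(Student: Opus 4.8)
The plan is to obtain all four identities as immediate specializations of Theorem~\ref{theorem100}, which already carries two free Horadam quaternion sequences $Z_n$ and $W_n$. The only observation needed is that the $(p,q)$-Fibonacci quaternions $U_n$ in \eqref{u} form a Horadam quaternion sequence in their own right, namely the one attached to the Horadam sequence $\{w_n(0,1;p,q)\}=\{u_n\}$. Hence $U_n$ is an admissible choice for either of the two free sequences appearing in the theorem, and no new machinery is required.

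First I would record the exact roles played by the two sequences in the statement
\begin{equation*}
Z_{n+r}W_{n+s}-Z_nW_{n+r+s}=(-q)^nu_r\left(Z_1W_s-Z_0W_{s+1}\right),
\end{equation*}
namely that $Z$ is always the left quaternion factor while $W$ is always the right factor. Then the four identities are read off from the four choices of (left factor, right factor) drawn from $\{U,W\}$: taking $Z_n=U_n$, $W_n=U_n$ yields the first; $Z_n=U_n$, $W_n=W_n$ the second; $Z_n=W_n$, $W_n=U_n$ the third; and $Z_n=W_n$, $W_n=W_n$ the fourth. In each case the right-hand side specializes correctly, since $Z_0,Z_1$ and $W_0,W_1$ are precisely the zeroth and first terms of the chosen sequences.

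The only point that requires care---and the reason the four identities are genuinely distinct rather than collapsing into one---is the non-commutativity of quaternion multiplication: because $Z_{n+r}W_{n+s}$ and $W_{n+s}Z_{n+r}$ differ in general, one must keep $Z$ fixed as the left factor and $W$ as the right factor throughout, and must resist silently interchanging, say, $U_{n+r}W_{n+s}$ with $W_{n+s}U_{n+r}$. This is exactly why the second and third identities are not the same statement, and why each must be derived from its own substitution into Theorem~\ref{theorem100}. Beyond this bookkeeping of the factor order, no further computation is needed.
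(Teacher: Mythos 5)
Your proposal is correct and matches the paper exactly: the paper states this corollary as an immediate consequence of Theorem~\ref{theorem100}, obtained by the same four substitutions of $U_n$ and $W_n$ (each being a Horadam quaternion sequence, with $U_n$ attached to $\{w_n(0,1;p,q)\}$) into the two slots of the theorem. Your remark about preserving the left/right factor order under non-commutativity is the right point of care and is consistent with the paper's treatment.
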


\begin{remark}
If we replace $n$ by $n-1$ and set $r=1$, $s=1$, we get back the Cassini's
identity (\ref{**}).
\end{remark}

We define a \textit{commutator bracket} between the Horadam quaternions $%
Z_{n}$ and $W_{n}$ by 
\begin{equation*}
\lbrack Z_{n},W_{n}]_{0}:=Z_{1}W_{0}-Z_{0}W_{1}.
\end{equation*}%
We note that $[W_{n},W_{n}]_{0}\neq 0$ since multiplication is
non-commutative for quaternions. Also, we define the following function for $%
Z_{n}$ and $W_{n}$: 
\begin{equation*}
\Delta (Z_{n},W_{n}):=Z_{1}W_{1}-pZ_{0}W_{1}-qZ_{0}W_{0}.
\end{equation*}%
It is clear that the commutator bracket and the function $\Delta $ can be
defined on any integer sequences also. For examples, we have $%
[w_{n},w_{n}]_{0}=0$ for any Horadam sequence $\{w_{n}\}$ and%
\begin{eqnarray*}
\Delta (F_{n},F_{n}) &=&F_{1}F_{1}-F_{0}F_{1}-F_{0}F_{0}=1, \\
\Delta (L_{n},L_{n}) &=&L_{1}L_{1}-L_{0}L_{1}-L_{0}L_{0}=-5, \\
\Delta (F_{n},L_{n}) &=&F_{1}L_{1}-F_{0}L_{1}-F_{0}L_{0}=1, \\
\Delta (L_{n},F_{n}) &=&L_{1}F_{1}-L_{0}F_{1}-L_{0}F_{0}=-1, \\
\Delta (w_{n},w_{n}) &=&w_{1}^{2}-pw_{0}w_{1}-qw_{0}^{2}.
\end{eqnarray*}%
Waddill \cite[equation (21)]{waddill} showed the following result for
Horadam sequences $\{w_{n}\}$: 
\begin{equation}
w_{n+r}w_{n+s}-w_{n}w_{n+r+s}=(-q)^{n}u_{r}u_{s}\Delta (w_{n},w_{n}).
\label{equation106}
\end{equation}%
\noindent The matrix computation we did in the proof of Theorem \ref%
{theorem100} can be done in an alternative way such that the expression $%
Z_{n+r}W_{n+s}-Z_{n}W_{n+r+s}$ does not depend on $Z_{n}$ and $W_{n}$ for $%
n\geq 2$ . We present it as a corollary.

\begin{corollary}
\label{corollary2} Let $n$, $r$, and $s$ be positive integers. Let $W_{n}$
and $Z_{n}$ be two sets of Horadam quaternions defined by using the Horadam
sequences $\{w_{n}\}=\{w_{n}(w_{0},w_{1};p,q)\}$ and $\{z_{n}\}=%
\{w_{n}(z_{0},z_{1};p,q)\}$ respectively. Then, 
\begin{equation*}
Z_{n+r}W_{n+s}-Z_{n}W_{n+r+s}=(-q)^{n}u_{r}\left(
qu_{s-1}[Z_{n},W_{n}]_{0}+u_{s}\Delta (Z_{n},W_{n})\right) .
\end{equation*}
\end{corollary}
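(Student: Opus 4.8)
The plan is to build directly on Theorem \ref{theorem100}, which already reduces $Z_{n+r}W_{n+s}-Z_nW_{n+r+s}$ to the $s$-dependent but $n$-free quantity $(-q)^n u_r(Z_1W_s - Z_0W_{s+1})$. Thus the whole task collapses to re-expressing the single bracket $Z_1W_s - Z_0W_{s+1}$ in terms of the initial-data invariants $[Z_n,W_n]_0 = Z_1W_0 - Z_0W_1$ and $\Delta(Z_n,W_n) = Z_1W_1 - pZ_0W_1 - qZ_0W_0$. Once I establish
\[
Z_1W_s - Z_0W_{s+1} = qu_{s-1}[Z_n,W_n]_0 + u_s\Delta(Z_n,W_n),
\]
the corollary follows by multiplying through by $(-q)^n u_r$.

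To obtain this, I would follow the hint and redo the last step of the proof of Theorem \ref{theorem100} in matrix form. There the expression was brought to the shape $(-q)^n u_r \begin{pmatrix} Z_1 & Z_0 \end{pmatrix} B^s \begin{pmatrix} W_0 \\ -W_1 \end{pmatrix}$ with $B = \begin{pmatrix} 0 & -1 \\ -q & p \end{pmatrix}$, after which $B^s\begin{pmatrix}W_0\\-W_1\end{pmatrix}$ was collapsed to $\begin{pmatrix}W_s\\-W_{s+1}\end{pmatrix}$. Instead, I would expand $B^s$ itself. The matrix $B$ has trace $p$ and determinant $-q$, so its characteristic polynomial is $\lambda^2 - p\lambda - q$, the same as that of $\mathbb{A}$; by the Cayley--Hamilton theorem $B^2 = pB + qI$, and an easy induction (using $u_1 = 1$, $u_2 = p$ and $u_{s+1} = pu_s + qu_{s-1}$) gives the closed form $B^s = u_s B + qu_{s-1} I$.

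Substituting this closed form, the factor becomes $\begin{pmatrix} Z_1 & Z_0\end{pmatrix}(u_sB + qu_{s-1}I)\begin{pmatrix} W_0\\-W_1\end{pmatrix}$. I would then expand the row--matrix--column product, keeping every $Z$-factor on the left and every $W$-factor on the right since quaternion multiplication is non-commutative; only the real scalars $u_s$, $u_{s-1}$, $p$, $q$ may be moved freely. Collecting the four resulting products $Z_1W_1$, $Z_1W_0$, $Z_0W_1$, $Z_0W_0$ and regrouping, the terms carrying $qu_{s-1}$ assemble into $[Z_n,W_n]_0$ and the remaining terms into $\Delta(Z_n,W_n)$, yielding exactly the displayed identity.

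The computation is essentially routine, so the only real care needed is the non-commutativity: the split into a commutator part and a $\Delta$ part is forced by which scalar multiplies which ordered product, and in particular the coefficient of $Z_0W_1$ must combine the $-pu_s$ from $\Delta$ with the $-qu_{s-1}$ from the commutator, which is precisely the lower-right entry $pu_s + qu_{s-1}$ of $B^s$. As a cross-check, I would note that the same identity can be reached without matrices by inserting the Horadam expansion $W_s = u_sW_1 + qu_{s-1}W_0$ (the $m=0$ case of (\ref{t22})) into $Z_1W_s - Z_0W_{s+1}$ and matching coefficients, where the coefficient of $Z_0W_1$ again closes up via $u_{s+1} = pu_s + qu_{s-1}$.
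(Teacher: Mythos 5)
Your proposal is correct and follows essentially the same route as the paper: the paper likewise returns to the intermediate expression $(-q)^n u_r\begin{pmatrix}Z_1 & Z_0\end{pmatrix}B^s\begin{pmatrix}W_0\\-W_1\end{pmatrix}$ from the proof of Theorem \ref{theorem100}, writes out $B^s=\begin{pmatrix}qu_{s-1} & -u_s\\ -qu_s & u_{s+1}\end{pmatrix}$, expands while keeping $Z$-factors on the left, and regroups via $u_{s+1}=pu_s+qu_{s-1}$ into the commutator and $\Delta$ parts. Your decomposition $B^s=u_sB+qu_{s-1}I$ via Cayley--Hamilton is only a slightly tidier packaging of the same computation (it makes the split into the $\Delta$ and commutator terms immediate), and your verified cross-check via $W_s=u_sW_1+qu_{s-1}W_0$ is also sound.
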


\begin{proof}
By the equation (\ref{equation105}), we have the following computation: 
\begin{align*}
& (-q)^{n}u_{r}%
\begin{pmatrix}
Z_{1} & Z_{0}%
\end{pmatrix}%
\begin{pmatrix}
0 & -1 \\ 
-q & p%
\end{pmatrix}%
^{s}%
\begin{pmatrix}
W_{0} \\ 
-W_{1}%
\end{pmatrix}
\\
& =(-q)^{n}u_{r}%
\begin{pmatrix}
Z_{1} & Z_{0}%
\end{pmatrix}%
\begin{pmatrix}
qu_{s-1} & -u_{s} \\ 
-qu_{s} & u_{s+1}%
\end{pmatrix}%
\begin{pmatrix}
W_{0} \\ 
-W_{1}%
\end{pmatrix}
\\
& =(-q)^{n}u_{r}\left(
qu_{s-1}Z_{1}W_{0}-qu_{s}Z_{0}W_{0}+u_{s}Z_{1}W_{1}-u_{s+1}Z_{0}W_{1}\right)
\\
& =(-q)^{n}u_{r}\left(
qu_{s-1}Z_{1}W_{0}-qu_{s}Z_{0}W_{0}+u_{s}Z_{1}W_{1}-(pu_{s}+qu_{s-1})Z_{0}W_{1}\right)
\\
& =(-q)^{n}u_{r}\big(qu_{s-1}\left( Z_{1}W_{0}-Z_{0}W_{1}\right)
+u_{s}\left( Z_{1}W_{1}-pZ_{0}W_{1}-qZ_{0}W_{0}\right) \big)
\end{align*}%
and hence the desired result.
\end{proof}

\begin{remark}
By Corollary \ref{corollary2}, we have 
\begin{equation*}
W_{n+r}W_{n+s}-W_{n}W_{n+r+s}=(-q)^{n}u_{r}\left(
qu_{s-1}[W_{n},W_{n}]_{0}+u_{s}\Delta (W_{n},W_{n})\right) .
\end{equation*}%
It is the \textit{non-commutative} version of the identity (\ref{equation106}%
) as we have a non-commutative multiplication rule for the Horadam
quaternions.
\end{remark}

On the other hand, we have yet another matrix computation for the expression 
$W_{n+r}W_{n+s}-W_{n}W_{n+r+s}$ due to equation (\ref{equation100}). The
theorem below is a generalization of Cassini's identities stated in
equations (\ref{***}) and (\ref{****}).

\begin{theorem}
\label{theorem101} Let $n$, $r$, and $s$ be positive integers. Let $W_{n}$
be the Horadam quaternions defined by using the Horadam sequences $%
\{w_{n}\}=\{w_{n}(w_{0},w_{1};p,q)\}$. Then, 
\begin{equation*}
W_{n+r}W_{n+s}-W_{n}W_{n+r+s}=(-q)^{n}u_{r}\left(
U_{1}U_{s}-U_{0}U_{s+1}\right) \Delta (w_{n},w_{n}).
\end{equation*}
\end{theorem}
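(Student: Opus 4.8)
The plan is to imitate the matrix computation of Theorem \ref{theorem100}, but to build it on the representation (\ref{equation100}) rather than (\ref{equation101}). Reading off the second row of (\ref{equation100}) gives $W_{n+a}=w_nU_{a+1}+qw_{n-1}U_a$ for every $a$, so all four quaternions in the claim share the \emph{same} scalar row $(w_n,\ qw_{n-1})$. First I would record the factorizations
\begin{equation*}
\begin{pmatrix} W_{n+r} & W_n\end{pmatrix}
=\begin{pmatrix} w_n & qw_{n-1}\end{pmatrix}
\begin{pmatrix} U_{r+1} & U_1\\ U_r & U_0\end{pmatrix},
\qquad
\begin{pmatrix} W_{n+s}\\ -W_{n+r+s}\end{pmatrix}
=\begin{pmatrix} U_{s+1} & U_s\\ -U_{r+s+1} & -U_{r+s}\end{pmatrix}
\begin{pmatrix} w_n\\ qw_{n-1}\end{pmatrix},
\end{equation*}
which are legitimate because the Horadam numbers $w_n,w_{n-1}$ are real and hence commute with the quaternion entries $U_a$.

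Multiplying a row times a column then expresses the target as a scalar sandwich
\begin{equation*}
W_{n+r}W_{n+s}-W_nW_{n+r+s}
=\begin{pmatrix} w_n & qw_{n-1}\end{pmatrix}\, P\, \begin{pmatrix} w_n\\ qw_{n-1}\end{pmatrix},
\end{equation*}
where $P$ is the $2\times 2$ quaternion matrix obtained from the two middle matrices above, with entries $P_{11}=U_{r+1}U_{s+1}-U_1U_{r+s+1}$, $P_{12}=U_{r+1}U_s-U_1U_{r+s}$, $P_{21}=U_rU_{s+1}-U_0U_{r+s+1}$, and $P_{22}=U_rU_s-U_0U_{r+s}$. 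Since the two off-diagonal entries are both multiplied by the common scalar $qw_nw_{n-1}$, I only need $P_{11}$, $P_{22}$, and the sum $P_{12}+P_{21}$.

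Each entry of $P$ is itself a Catalan-type difference, so I would evaluate it using the first identity of Corollary \ref{corollary1} (an algebraic consequence of the recurrence, hence valid for base indices $0$ and $1$). Writing $K:=u_r(U_1U_s-U_0U_{s+1})$, base index $0$ gives $P_{22}=K$ and base index $1$ gives $P_{11}=-qK$, while the same identity produces $P_{12}=-qu_r(U_1U_{s-1}-U_0U_s)$ and $P_{21}=u_r(U_1U_{s+1}-U_0U_{s+2})$. The step I expect to be the main obstacle is collapsing this off-diagonal sum: combining the two and applying $U_{s+1}=pU_s+qU_{s-1}$ and $U_{s+2}=pU_{s+1}+qU_s$ turns the brackets $-qU_{s-1}+U_{s+1}$ and $-qU_s+U_{s+2}$ into $pU_s$ and $pU_{s+1}$, yielding $P_{12}+P_{21}=pK$. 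This is the one place where the recurrence must be invoked to reduce two unrelated-looking expressions to a multiple of $K$.

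Finally I would substitute these values into the sandwich, using scalar commutativity throughout, to get
\begin{equation*}
W_{n+r}W_{n+s}-W_nW_{n+r+s}
=\bigl(-qw_n^2+pqw_nw_{n-1}+q^2w_{n-1}^2\bigr)K
=q\bigl(w_{n+1}w_{n-1}-w_n^2\bigr)K,
\end{equation*}
and recognize the scalar prefactor as $\det(\mathbb{T}\mathbb{A}^{n-1})=\det\mathbb{T}\cdot(\det\mathbb{A})^{n-1}$. Since $\det\mathbb{T}=q(w_2w_0-w_1^2)=-q\,\Delta(w_n,w_n)$ and $\det\mathbb{A}=-q$, this prefactor equals $(-q)^n\Delta(w_n,w_n)$. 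As $\Delta(w_n,w_n)$ is a real scalar it commutes past $K$, and therefore $W_{n+r}W_{n+s}-W_nW_{n+r+s}=(-q)^n u_r\left(U_1U_s-U_0U_{s+1}\right)\Delta(w_n,w_n)$, which is the asserted identity.
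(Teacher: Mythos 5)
Your argument is correct, but it takes a genuinely different route from the paper's. Both proofs write $W_{n+r}W_{n+s}-W_{n}W_{n+r+s}$ as a row--matrix--column sandwich built from (\ref{equation100}), but the factorizations are transposed: the paper pulls the quaternions $(U_{1},U_{0})$ and $(U_{s},-U_{s+1})^{T}$ to the \emph{outside}, so that the middle is a product of four scalar matrices which collapses (after the two $u_{r}$-matrices cancel to $u_{r}I$) to $(-q)^{n}u_{r}(w_{1}^{2}-w_{0}w_{2})I$ by the scalar Catalan identity alone; you instead keep the scalar row $(w_{n},\,qw_{n-1})$ outside and leave a $2\times 2$ \emph{quaternion} matrix $P$ in the middle, whose entries you must evaluate one by one via the quaternionic Catalan identity of Corollary \ref{corollary1} (extended to base index $0$, and to $s-1$ in $P_{12}$ --- both extensions are legitimate; e.g.\ $P_{22}=K$ follows directly from $U_{r+a}=u_{r}U_{a+1}+qu_{r-1}U_{a}$), plus one application of the recurrence to collapse $P_{12}+P_{21}$ to $pK$. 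The paper's arrangement buys a shorter computation needing only the scalar identity $w_{n+1}w_{n-1}-w_{n}^{2}=(-q)^{n-1}(w_{2}w_{0}-w_{1}^{2})$; yours buys a derivation that reuses the already-established Corollary \ref{corollary1} and ends with the pleasant identification of the scalar prefactor as $\det(\mathbb{T}\mathbb{A}^{n-1})=(-q)^{n}\Delta(w_{n},w_{n})$. All the individual steps check out ($P_{11}=-qK$, $P_{22}=K$, $P_{12}+P_{21}=pK$, and $-qw_{n}^{2}+pqw_{n}w_{n-1}+q^{2}w_{n-1}^{2}=q(w_{n+1}w_{n-1}-w_{n}^{2})$), so the proof is complete.
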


\begin{proof}
By equations (\ref{equation100}) and (\ref{equation101}), we have the
following matrix computation, 
\begin{align}
\begin{pmatrix}
W_{n+r} & W_{n}%
\end{pmatrix}%
& =%
\begin{pmatrix}
W_{n+1} & W_{n}%
\end{pmatrix}%
\begin{pmatrix}
u_{r} & 0 \\ 
qu_{r-1} & 1%
\end{pmatrix}
\notag \\
& =%
\begin{pmatrix}
U_{1} & U_{0}%
\end{pmatrix}%
\begin{pmatrix}
w_{n+1} & w_{n} \\ 
qw_{n} & qw_{n-1}%
\end{pmatrix}%
\begin{pmatrix}
u_{r} & 0 \\ 
qu_{r-1} & 1%
\end{pmatrix}%
,  \label{equation107} \\
\begin{pmatrix}
W_{n+s} \\ 
-W_{n+r+s}%
\end{pmatrix}%
& =%
\begin{pmatrix}
1 & 0 \\ 
-qu_{r-1} & u_{r}%
\end{pmatrix}%
\begin{pmatrix}
W_{n+s} \\ 
-W_{n+s+1}%
\end{pmatrix}
\notag \\
& =%
\begin{pmatrix}
1 & 0 \\ 
-qu_{r-1} & u_{r}%
\end{pmatrix}%
\begin{pmatrix}
qw_{n-1} & -w_{n} \\ 
-qw_{n} & w_{n+1}%
\end{pmatrix}%
\begin{pmatrix}
U_{s} \\ 
-U_{s+1}%
\end{pmatrix}%
.  \label{equation108}
\end{align}%
We also note that 
\begin{align}
& 
\begin{pmatrix}
w_{n+1} & w_{n} \\ 
qw_{n} & qw_{n-1}%
\end{pmatrix}%
\begin{pmatrix}
u_{r} & 0 \\ 
qu_{r-1} & 1%
\end{pmatrix}%
\begin{pmatrix}
1 & 0 \\ 
-qu_{r-1} & u_{r}%
\end{pmatrix}%
\begin{pmatrix}
qw_{n-1} & -w_{n} \\ 
-qw_{n} & w_{n+1}%
\end{pmatrix}
\notag \\
& =%
\begin{pmatrix}
w_{n+1} & w_{n} \\ 
qw_{n} & qw_{n-1}%
\end{pmatrix}%
\left( u_{r}I\right) 
\begin{pmatrix}
qw_{n-1} & -w_{n} \\ 
-qw_{n} & w_{n+1}%
\end{pmatrix}
\notag \\
& =u_{r}%
\begin{pmatrix}
w_{n+1} & w_{n} \\ 
qw_{n} & qw_{n-1}%
\end{pmatrix}%
\begin{pmatrix}
qw_{n-1} & -w_{n} \\ 
-qw_{n} & w_{n+1}%
\end{pmatrix}
\notag \\
& =u_{r}%
\begin{pmatrix}
q(w_{n+1}w_{n-1}-w_{n}^{2}) & 0 \\ 
0 & q(-w_{n}^{2}+w_{n-1}w_{n+1})%
\end{pmatrix}
\notag \\
& =(-q)u_{r}(w_{n}^{2}-w_{n+1}w_{n-1})I  \notag \\
& =(-q)^{n}u_{r}\left( w_{1}^{2}-w_{0}w_{2}\right) I.  \label{equation109}
\end{align}%
The last equality is due to a general Catalan identity for Horadam sequences
(see Tangboonduangjit and Thanatipanonda \cite[Proposition 1]{thotsaporn}).
We combine (\ref{equation107}), (\ref{equation108}) and (\ref{equation109})
to get 
\begin{align*}
W_{n+r}W_{n+s}-W_{n}W_{n+r+s}& =%
\begin{pmatrix}
W_{n+r} & W_{n}%
\end{pmatrix}%
\begin{pmatrix}
W_{n+s} \\ 
-W_{n+r+s}%
\end{pmatrix}
\\
& =(-q)^{n}u_{r}\left( w_{1}^{2}-w_{0}w_{2}\right) 
\begin{pmatrix}
U_{1} & U_{0}%
\end{pmatrix}%
\begin{pmatrix}
U_{s} \\ 
U_{s+1}%
\end{pmatrix}
\\
& =(-q)^{n}u_{r}\left( w_{1}^{2}-w_{0}w_{2}\right) \left(
U_{1}U_{s}-U_{0}U_{s+1}\right) \\
& =(-q)^{n}u_{r}\left( w_{1}^{2}-w_{0}(pw_{1}+qw_{0})\right) \left(
U_{1}U_{s}-U_{0}U_{s+1}\right) \\
& =(-q)^{n}u_{r}\left( U_{1}U_{s}-U_{0}U_{s+1}\right) \Delta (w_{n},w_{n}).
\end{align*}
\end{proof}

\begin{remark}
If we replace $n$ by $n-1$ and set $r=s=1$ in Theorem \ref{theorem101}, then
we get the identities (\ref{***}) and (\ref{****}).
\end{remark}


Finally, we will derive identities involving $\left( p,q\right) $-Fibonacci
and $\left( p,q\right) $-Lucas quaternions. We need another matrix identity
as follows:

For $n\geq 1$, by induction, we have 
\begin{equation}
\mathbb{K}:=%
\begin{pmatrix}
V_{1} & dU_{1} \\ 
U_{1} & V_{1}%
\end{pmatrix}%
\Rightarrow \mathbb{KB}^{n-1}=2^{n-1}%
\begin{pmatrix}
V_{n} & dU_{n} \\ 
U_{n} & V_{n}%
\end{pmatrix}
\label{6}
\end{equation}%
where the matrix $\mathbb{B}$ satisfies the following matrix relation:%
\begin{equation}
\mathbb{B}:=%
\begin{pmatrix}
p & d \\ 
1 & p%
\end{pmatrix}%
\Rightarrow \mathbb{B}^{n}=2^{n-1}%
\begin{pmatrix}
v_{n} & du_{n} \\ 
u_{n} & v_{n}%
\end{pmatrix}
\label{66}
\end{equation}%
and $d:=p^{2}+4q.$

By using the matrix equalities (\ref{6}) and (\ref{66}), we obtain the
following results.

\begin{theorem}
\label{t4} For $n\geq 1,$ we have%
\begin{eqnarray}
V_{n}^{2}-dU_{n}^{2} &=&\left( -q\right) ^{n-1}\left(
V_{1}^{2}-dU_{1}^{2}\right)  \notag \\
&=&4\left( -q\right) ^{n}\left( V_{0}+\left( -1+q-q^{2}+q^{3}\right) \right)
,  \label{t41}
\end{eqnarray}%
\begin{equation}
v_{m}V_{n}+du_{m}U_{n}=2V_{m+n},\text{ \ \ \ \ \ \ \ \ \ \ \ \ \ \ \ \ \ \ \
\ \ \ \ \ \ \ \ \ }  \label{t42}
\end{equation}%
\begin{equation}
u_{m}V_{n}+dv_{m}U_{n}=2U_{m+n},\text{ \ \ \ \ \ \ \ \ \ \ \ \ \ \ \ \ \ \ \
\ \ \ \ \ \ \ \ \ }  \label{t43}
\end{equation}%
\begin{equation}
V_{m}V_{n}+dU_{m}U_{n}=V_{1}V_{m+n-1}+dU_{1}U_{m+n-1},\text{ \ \ \ \ \ }
\label{t44}
\end{equation}%
\begin{equation}
U_{m}V_{n}+V_{m}U_{n}=U_{1}V_{m+n-1}+V_{1}U_{m+n-1}.\text{ \ \ \ \ \ \ \ \ \
\ }  \label{t45}
\end{equation}
\end{theorem}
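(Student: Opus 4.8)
The plan is to derive all five identities from the two matrix relations (\ref{6}) and (\ref{66}) by the determinant-and-product bookkeeping already used for Theorems \ref{t1} and \ref{t2}. The structural point that makes everything go through is that $\mathbb{B}$ is a \emph{scalar} (real-entried) matrix, so it commutes with the quaternion-entried matrix $\mathbb{K}$; that is, $\mathbb{B}^{k}\mathbb{K}=\mathbb{K}\mathbb{B}^{k}$ for every $k$, and consequently the row-expanded determinant is multiplicative against powers of $\mathbb{B}$. I will expand every determinant along the first row so the placement of the non-commuting quaternion factors is unambiguous.

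For (\ref{t41}) I would take determinants of (\ref{6}). Expanding $\mathbb{K}$ along its first row gives $\det \mathbb{K}=V_{1}^{2}-dU_{1}^{2}$, and the same expansion of the right-hand side $2^{n-1}\left(\begin{smallmatrix}V_{n}&dU_{n}\\ U_{n}&V_{n}\end{smallmatrix}\right)$ gives $4^{n-1}\left(V_{n}^{2}-dU_{n}^{2}\right)$. Since $\mathbb{B}$ is scalar, $\det\left(\mathbb{K}\mathbb{B}^{n-1}\right)=\det\mathbb{K}\,(\det\mathbb{B})^{n-1}$ with $\det\mathbb{B}=p^{2}-d=-4q$, so equating the two evaluations gives $4^{n-1}(V_{n}^{2}-dU_{n}^{2})=(V_{1}^{2}-dU_{1}^{2})(-4q)^{n-1}$, which is the first equality after cancelling $4^{n-1}$. (Unlike Theorem \ref{t1}, the two row expansions coincide here, since the scalar $d$ commutes with $U_{1}$; there is a single Cassini form.) The second equality is the one genuine computation: using $X^{2}=(x_{0}^{2}-x_{1}^{2}-x_{2}^{2}-x_{3}^{2})+2x_{0}(x_{1}i+x_{2}j+x_{3}k)$, the real part of $V_{1}^{2}-dU_{1}^{2}$ collapses through the scalar identity $v_{k}^{2}-du_{k}^{2}=4(-q)^{k}$ (for $k=1,2,3,4$), while the three vector components reassemble into $-4q$ times the imaginary part of $V_{0}$, yielding $4(-q)\big(V_{0}+(-1+q-q^{2}+q^{3})\big)$.

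The remaining four identities come from splitting one $\mathbb{K}\mathbb{B}$-word at two different places. For (\ref{t42}) and (\ref{t43}) I would use $\mathbb{K}\mathbb{B}^{m+n-1}=\mathbb{B}^{m}\left(\mathbb{K}\mathbb{B}^{n-1}\right)$, substitute the closed forms (\ref{6}) and (\ref{66}), and compare the $(1,1)$ and $(2,1)$ entries of
\[
2^{m+n-2}\begin{pmatrix} v_{m} & du_{m} \\ u_{m} & v_{m} \end{pmatrix}\begin{pmatrix} V_{n} & dU_{n} \\ U_{n} & V_{n} \end{pmatrix}=2^{m+n-1}\begin{pmatrix} V_{m+n} & dU_{m+n} \\ U_{m+n} & V_{m+n} \end{pmatrix}.
\]
For (\ref{t44}) and (\ref{t45}) I would instead use $\mathbb{K}\left(\mathbb{K}\mathbb{B}^{m+n-2}\right)=\left(\mathbb{K}\mathbb{B}^{m-1}\right)\left(\mathbb{K}\mathbb{B}^{n-1}\right)$, where both sides carry the factor $2^{m+n-2}$; comparing the $(1,1)$ and $(2,1)$ entries of the resulting product of two $\left(\begin{smallmatrix}V&dU\\ U&V\end{smallmatrix}\right)$-type matrices gives $V_{m}V_{n}+dU_{m}U_{n}=V_{1}V_{m+n-1}+dU_{1}U_{m+n-1}$ and $U_{m}V_{n}+V_{m}U_{n}=U_{1}V_{m+n-1}+V_{1}U_{m+n-1}$.

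The main obstacle is bookkeeping rather than depth. Because the quaternion entries do not commute, I must keep the left/right placement of every factor honest throughout, which is exactly where the scalar nature of $\mathbb{B}$ pays off: it lets me slide $\mathbb{B}$-powers past $\mathbb{K}$ and factor determinants without ever reordering quaternions. I expect the second equality of (\ref{t41}) to be the most error-prone step, as it is the only place where the explicit components of $U_{1}$, $V_{1}$, and $V_{0}$ must be expanded and matched term by term.
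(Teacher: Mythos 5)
Your proposal is correct and follows essentially the same route as the paper: taking determinants of (\ref{6}) for (\ref{t41}) (where the paper outsources the final component computation to Cerda-Morales, which you carry out directly), splitting $\mathbb{K}\mathbb{B}^{m+n-1}=\left(\mathbb{K}\mathbb{B}^{n-1}\right)\mathbb{B}^{m}$ for (\ref{t42})--(\ref{t43}) (the paper puts $\mathbb{B}^{m}$ on the right, you on the left; immaterial here), and $\mathbb{K}\left(\mathbb{K}\mathbb{B}^{m+n-2}\right)=\left(\mathbb{K}\mathbb{B}^{m-1}\right)\left(\mathbb{K}\mathbb{B}^{n-1}\right)$ for (\ref{t44})--(\ref{t45}). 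Two caveats: your honest $(2,1)$-entry comparison yields $u_{m}V_{n}+v_{m}U_{n}=2U_{m+n}$ \emph{without} the factor $d$, so (\ref{t43}) as printed contains a typo (check $p=q=1$, $m=n=1$: $F_{1}K_{1}+5L_{1}Q_{1}\neq 2Q_{2}$, whereas $F_{1}K_{1}+L_{1}Q_{1}=2Q_{2}$); and the blanket claim that a real-entried matrix commutes with a quaternion-entried one is false in general --- what saves the argument is that $\mathbb{B}=pI+J$ and $\mathbb{K}=V_{1}I+U_{1}J$ with $J=\begin{pmatrix}0&d\\1&0\end{pmatrix}$, all quaternion coefficients sitting on the left of real matrices.
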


\begin{proof}
By taking determinant on both sides of the matrix equality (\ref{6}), we
obtain (\ref{t41}) (see also Morales's paper \cite{morales1}).

By considering the matrix equality $\mathbb{KB}^{m+n-1}=\left( \mathbb{KB}%
^{n-1}\right) \mathbb{B}^{m}$, we get the identities (\ref{t42}) and (\ref%
{t43}).

By the matrix equality $\mathbb{K}\left( \mathbb{B}^{m+n-2}\mathbb{K}\right)
=\mathbb{K}\left( \mathbb{K}\mathbb{B}^{m+n-2}\right) =\left( \mathbb{K}%
\mathbb{B}^{m-1}\right) \left( \mathbb{K}\mathbb{B}^{n-1}\right) $, we have%
\begin{equation*}
\begin{pmatrix}
V_{1} & dU_{1} \\ 
U_{1} & V_{1}%
\end{pmatrix}%
\begin{pmatrix}
V_{m+n-1} & dU_{m+n-1} \\ 
U_{m+n-1} & V_{m+n-1}%
\end{pmatrix}%
=%
\begin{pmatrix}
V_{m} & dU_{m} \\ 
U_{m} & V_{m}%
\end{pmatrix}%
\begin{pmatrix}
V_{n} & dU_{n} \\ 
U_{n} & V_{n}%
\end{pmatrix}%
\end{equation*}%
and hence 
\begin{align*}
& 
\begin{pmatrix}
V_{1}V_{m+n-1}+dU_{1}U_{m+n-1} & d\left( V_{1}U_{m+n-1}+U_{1}V_{m+n-1}\right)
\\ 
U_{1}V_{m+n-1}+V_{1}U_{m+n-1} & dU_{1}U_{m+n-1}+V_{1}V_{m+n-1}%
\end{pmatrix}
\\
& =%
\begin{pmatrix}
V_{m}V_{n}+dU_{m}U_{n} & d\left( V_{m}U_{n}+U_{m}V_{n}\right) \\ 
U_{m}V_{n}+V_{m}U_{n} & dU_{m}U_{n}+V_{m}V_{n}%
\end{pmatrix}%
.
\end{align*}%
By comparing the corresponding entries in the matrices on both sides of the
equation, we get the identities (\ref{t44}) and (\ref{t45}).
\end{proof}

\begin{remark}
We note that, for $p=q=1,$ the first result reduces to the identity%
\begin{equation*}
K_{n}^{2}-5Q_{n}^{2}=4\left( -1\right) ^{n}\left( 2+i+3j+4k\right) .
\end{equation*}
\end{remark}

\section{Some binomial-sum identities for Horadam quaternions}

\label{section3}

For any positive integer $n$, by (\ref{5}), we have 
\begin{align}
\mathbb{A}^n 
\begin{pmatrix}
W_1 \\ 
W_0%
\end{pmatrix}
&= 
\begin{pmatrix}
p & q \\ 
1 & 0%
\end{pmatrix}%
^n 
\begin{pmatrix}
W_1 \\ 
W_0%
\end{pmatrix}%
=%
\begin{pmatrix}
W_{n+1} \\ 
W_n%
\end{pmatrix}%
.  \label{equation3.1}
\end{align}%
Also, by induction on $n$, we have 
\begin{align}
\mathbb{A}^{-n} 
\begin{pmatrix}
W_1 \\ 
W_0%
\end{pmatrix}
&= \frac{1}{q^n} 
\begin{pmatrix}
0 & q \\ 
1 & -p%
\end{pmatrix}%
^n 
\begin{pmatrix}
W_1 \\ 
W_0%
\end{pmatrix}
=%
\begin{pmatrix}
W_{1-n} \\ 
W_{-n}%
\end{pmatrix}%
.  \label{equation3.1.2}
\end{align}%
By the Cayley-Hamilton theorem, we have 
\begin{align}
\mathbb{A}^2 &= p \mathbb{A} +q I.  \label{equation3.2}
\end{align}%
We state some binomial--sum identities for Horadam quaternions below.

\begin{theorem}
\label{theorem3.1} Let $n$ and $k$ be non-negative integers. Then, 
\begin{align*}
\sum_{j=0}^n \binom{n}{j} p^j q^{n-j} W_{j+k} &= W_{2n+k}.
\end{align*}
\end{theorem}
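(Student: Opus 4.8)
The plan is to combine the Cayley--Hamilton relation (\ref{equation3.2}), namely $\mathbb{A}^{2}=p\mathbb{A}+qI$, with the binomial theorem for commuting matrices. Since $\mathbb{A}$ and the identity $I$ commute, raising both sides to the $n$-th power and expanding gives
\begin{equation*}
\mathbb{A}^{2n}=\left( p\mathbb{A}+qI\right) ^{n}=\sum_{j=0}^{n}\binom{n}{j}p^{j}q^{n-j}\mathbb{A}^{j}.
\end{equation*}
This already produces the binomial weights $\binom{n}{j}p^{j}q^{n-j}$ appearing in the statement, so the remaining task is simply to read off the Horadam quaternions by letting both sides act on a suitable column vector.

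The natural vector to choose is $\begin{pmatrix}W_{k+1}\\ W_{k}\end{pmatrix}=\mathbb{A}^{k}\begin{pmatrix}W_{1}\\ W_{0}\end{pmatrix}$, which is legitimate for every $k\ge 0$ by (\ref{equation3.1}) (the case $k=0$ being trivial). Acting on the left with $\mathbb{A}^{2n}$ and using $\mathbb{A}^{2n}\mathbb{A}^{k}=\mathbb{A}^{2n+k}$ together with (\ref{equation3.1}) again gives
\begin{equation*}
\mathbb{A}^{2n}\begin{pmatrix}W_{k+1}\\ W_{k}\end{pmatrix}=\mathbb{A}^{2n+k}\begin{pmatrix}W_{1}\\ W_{0}\end{pmatrix}=\begin{pmatrix}W_{2n+k+1}\\ W_{2n+k}\end{pmatrix}.
\end{equation*}
On the other hand, applying the binomial expansion term by term and using $\mathbb{A}^{j}\begin{pmatrix}W_{k+1}\\ W_{k}\end{pmatrix}=\begin{pmatrix}W_{j+k+1}\\ W_{j+k}\end{pmatrix}$ yields
\begin{equation*}
\mathbb{A}^{2n}\begin{pmatrix}W_{k+1}\\ W_{k}\end{pmatrix}=\sum_{j=0}^{n}\binom{n}{j}p^{j}q^{n-j}\begin{pmatrix}W_{j+k+1}\\ W_{j+k}\end{pmatrix}.
\end{equation*}
Comparing the second entries of the two resulting column vectors produces exactly $\sum_{j=0}^{n}\binom{n}{j}p^{j}q^{n-j}W_{j+k}=W_{2n+k}$, while the first entries give the same identity shifted by one in $k$.

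The only point that needs care is the non-commutativity of quaternion multiplication, the recurring subtlety throughout the paper. Here it causes no difficulty: the matrix $\mathbb{A}$ has real scalar entries and the coefficients $p^{j}q^{n-j}$ are real, so they commute freely with the quaternion entries $W_{m}$. Consequently the matrix--vector products are unambiguous and the binomial theorem applies verbatim, with no left/right ordering to track. For $n=0$ the identity is a tautology, so one may assume $n\ge 1$ when invoking (\ref{equation3.1}). I expect this scalar-coefficient observation to be the only conceptual step; everything else is a direct reading of matrix entries.
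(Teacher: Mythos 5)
Your proposal is correct and follows essentially the same route as the paper: both use the Cayley--Hamilton relation $\mathbb{A}^{2}=p\mathbb{A}+qI$, expand $(p\mathbb{A}+qI)^{n}$ by the binomial theorem (valid since the entries of $\mathbb{A}$ are real scalars), and read off the identity by acting on the column vector from (\ref{equation3.1}) and comparing second entries. The only cosmetic difference is that the paper absorbs $\mathbb{A}^{k}$ into the operator before applying it to $\begin{pmatrix}W_{1}\\ W_{0}\end{pmatrix}$, whereas you apply $\mathbb{A}^{k}$ to the vector first; these are the same computation.
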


\begin{proof}
By (\ref{equation3.2}), we have 
\begin{equation*}
\mathbb{A}^{2n+k}=\mathbb{A}^{k}\mathbb{A}^{2n}=\mathbb{A}^{k}(p\mathbb{A}%
+qI)^{n}=\mathbb{A}^{k}\sum_{j=0}^{n}\binom{n}{j}p^{j}q^{n-j}\mathbb{A}%
^{j}=\sum_{j=0}^{n}\binom{n}{j}p^{j}q^{n-j}\mathbb{A}^{j+k}.
\end{equation*}%
Hence, we have 
\begin{equation*}
\mathbb{A}^{2n+k}%
\begin{pmatrix}
W_{1} \\ 
W_{0}%
\end{pmatrix}%
=\left( \sum_{j=0}^{n}\binom{n}{j}p^{j}q^{n-j}\mathbb{A}^{j+k}\right) 
\begin{pmatrix}
W_{1} \\ 
W_{0}%
\end{pmatrix}%
.
\end{equation*}%
So, we obtain the result by (\ref{equation3.1}) and comparing the second
entries in the matrices on both sides of the equation.
\end{proof}

\begin{remark}
If we set $k=0$ in Theorem \ref{theorem3.1}, for the $(p,q)$-Fibonacci
quaternions defined by the $(p,q)$-Fibonacci sequence $\{w_{n}(0,1;p,q)\}$,
we get the result proved by Ipek \cite[Theorem 2.7]{ipek}; for the Horadam
quaternions defined by the Horadam sequences $\{w_{n}(0,1;p,1)\}$ and $%
\{w_{n}(2,p;p,1)\}$, we get the identities shown by Kesim and Polatli \cite[%
Theorem 2.4]{kesim}; for the Fibonacci quaternions, we get the first
identity of Theorem 3.5 in Halici's paper \cite{halici0}.
\end{remark}

\begin{theorem}
\label{theorem3.2} Let $n$ and $k$ be non-negative integers. Then, 
\begin{align*}
\sum_{j=0}^n \binom{n}{j} (-1)^j p^{n-j} W_{j+k} &= (-q)^n W_{-n+k}.
\end{align*}
\end{theorem}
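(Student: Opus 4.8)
The plan is to mirror the proof of Theorem \ref{theorem3.1}, replacing the expansion of $\mathbb{A}^{2n}$ by an expansion involving the inverse matrix $\mathbb{A}^{-1}$. The key observation is that the Cayley--Hamilton relation $\mathbb{A}^{2}=p\mathbb{A}+qI$ in (\ref{equation3.2}) can be rearranged to express a single positive power of $\mathbb{A}$ in terms of $I$ and $\mathbb{A}^{-1}$. Indeed, multiplying through by $\mathbb{A}^{-1}$ and solving gives $q\mathbb{A}^{-1}=\mathbb{A}-pI$, hence $\mathbb{A}=pI+q\mathbb{A}^{-1}$. This is the identity I would raise to the $n$th power.

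First I would write $\mathbb{A}^{n}=(pI+q\mathbb{A}^{-1})^{n}$ and apply the binomial theorem, which is legitimate since $I$ and $\mathbb{A}^{-1}$ commute. This yields
\begin{equation*}
\mathbb{A}^{n}=\sum_{j=0}^{n}\binom{n}{j}p^{n-j}q^{j}\mathbb{A}^{-j}.
\end{equation*}
Next I would multiply on the left by $\mathbb{A}^{k}$ to shift the index, producing $\mathbb{A}^{n+k}=\sum_{j=0}^{n}\binom{n}{j}p^{n-j}q^{j}\mathbb{A}^{k-j}$. The target identity, however, has the factor $(-q)^{n}W_{-n+k}$ on the right and a sign $(-1)^{j}$ inside the sum, so I expect the cleaner route is instead to start from $-q\mathbb{A}^{-1}=pI-\mathbb{A}$, i.e. $\mathbb{A}-pI=q\mathbb{A}^{-1}$ rewritten as $pI-\mathbb{A}=-q\mathbb{A}^{-1}$, and raise $\mathbb{A}^{-1}=\frac{1}{-q}(\mathbb{A}-pI)$ to the power to match the signs. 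Concretely, from $q\mathbb{A}^{-1}=\mathbb{A}-pI$ I would form $(-q)^{n}\mathbb{A}^{-n}=(pI-\mathbb{A})^{n}=\sum_{j=0}^{n}\binom{n}{j}(-1)^{j}p^{n-j}\mathbb{A}^{j}$, which already carries exactly the $(-1)^{j}p^{n-j}$ weights appearing in the statement.

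Then I would multiply both sides on the left by $\mathbb{A}^{k}$ to get
\begin{equation*}
(-q)^{n}\mathbb{A}^{k-n}=\sum_{j=0}^{n}\binom{n}{j}(-1)^{j}p^{n-j}\mathbb{A}^{j+k},
\end{equation*}
apply each side to the column vector $\bigl(\begin{smallmatrix}W_{1}\\ W_{0}\end{smallmatrix}\bigr)$, and invoke the two matrix-action identities (\ref{equation3.1}) and (\ref{equation3.1.2}). On the left, $(-q)^{n}\mathbb{A}^{k-n}\bigl(\begin{smallmatrix}W_{1}\\ W_{0}\end{smallmatrix}\bigr)$ produces the vector whose second entry is $(-q)^{n}W_{-n+k}$, and on the right the sum produces the vector whose second entry is $\sum_{j=0}^{n}\binom{n}{j}(-1)^{j}p^{n-j}W_{j+k}$. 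Comparing second entries yields the claim. The only mild obstacle is bookkeeping the sign of $q$ and confirming that the action of $\mathbb{A}^{k-n}$ on the seed vector gives $W_{k-n}$ in the lower slot; this follows by combining (\ref{equation3.1}) for the positive shift by $k$ with (\ref{equation3.1.2}) for the negative shift by $n$, using that powers of $\mathbb{A}$ commute so the order of applying these shifts is immaterial.
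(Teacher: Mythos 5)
Your proposal is correct and follows essentially the same route as the paper: rearranging the Cayley--Hamilton relation to $pI-\mathbb{A}=-q\mathbb{A}^{-1}$, raising it to the $n$th power via the binomial theorem, multiplying by $\mathbb{A}^{k}$, and applying the result to the seed vector $\bigl(\begin{smallmatrix}W_{1}\\ W_{0}\end{smallmatrix}\bigr)$ using (\ref{equation3.1}) and (\ref{equation3.1.2}). The initial detour through $\mathbb{A}=pI+q\mathbb{A}^{-1}$ is harmless, and your final argument matches the paper's proof exactly.
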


\begin{proof}
By (\ref{equation3.2}), we have 
\begin{align}
\mathbb{A} ( pI- \mathbb{A} ) &= -qI  \notag \\
(pI-\mathbb{A}) &= -q \mathbb{A}^{-1} .  \notag
\end{align}%
Hence, we have 
\begin{equation*}
(-q)^n \mathbb{A}^{-n+k}=\mathbb{A}^k (pI-\mathbb{A})^n =\mathbb{A}^k
\sum_{j=0}^n \binom{n}{j}(-1)^j p^{n-j}\mathbb{A}^{j}= \sum_{j=0}^n \binom{n%
}{j}(-1)^j p^{n-j}\mathbb{A}^{j+k}.
\end{equation*}
So, we obtain the result by (\ref{equation3.1.2}) and comparing the second
entries in the matrices on both sides of the equation.
\end{proof}

\begin{remark}
If we set $k=0$ in Theorem \ref{theorem3.2} for the Fibonacci quaternions,
then we get the second identity of Theorem 3.5 in Halici's paper \cite%
{halici0}.
\end{remark}

\begin{theorem}
\label{theorem3.3} Let $n$ and $k$ be non-negative integers. Then, 
\begin{equation*}
\sum_{j=0}^{n}\binom{n}{j}q^{n-j}W_{2j+k}=%
\begin{cases}
d^{\frac{n}{2}}W_{n+k}, & \text{if $n$ is even;} \\ 
d^{\frac{n-1}{2}}\left( W_{n+k+1}+qW_{n-k-1}\right) , & \text{if $n$ is odd.}%
\end{cases}%
\end{equation*}
\end{theorem}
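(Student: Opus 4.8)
The plan is to recognize the left-hand side as a single matrix power applied to an initial vector. Since $\mathbb{A}^{2}$ and $qI$ commute, the binomial theorem gives
\begin{equation*}
\sum_{j=0}^{n}\binom{n}{j}q^{n-j}\mathbb{A}^{2j}=(\mathbb{A}^{2}+qI)^{n}.
\end{equation*}
Writing $\begin{pmatrix}W_{k+1}\\W_{k}\end{pmatrix}=\mathbb{A}^{k}\begin{pmatrix}W_{1}\\W_{0}\end{pmatrix}$ and using (\ref{equation3.1}) in the shifted form $\mathbb{A}^{2j}\begin{pmatrix}W_{k+1}\\W_{k}\end{pmatrix}=\begin{pmatrix}W_{2j+k+1}\\W_{2j+k}\end{pmatrix}$, the second entry of $(\mathbb{A}^{2}+qI)^{n}\begin{pmatrix}W_{k+1}\\W_{k}\end{pmatrix}$ is exactly the target sum $\sum_{j=0}^{n}\binom{n}{j}q^{n-j}W_{2j+k}$. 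So it suffices to evaluate $(\mathbb{A}^{2}+qI)^{n}$ as a polynomial in $\mathbb{A}$ and read off the second coordinate.

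The crux of the argument, and the step I expect to carry the whole proof, is the identity
\begin{equation*}
(\mathbb{A}^{2}+qI)^{2}=d\,\mathbb{A}^{2}.
\end{equation*}
I would prove this directly from the Cayley--Hamilton relation (\ref{equation3.2}): substituting $\mathbb{A}^{2}=p\mathbb{A}+qI$ gives $\mathbb{A}^{2}+qI=p\mathbb{A}+2qI$, and squaring while reducing $\mathbb{A}^{2}$ again by (\ref{equation3.2}) collapses $(p\mathbb{A}+2qI)^{2}$ to $p(p^{2}+4q)\mathbb{A}+q(p^{2}+4q)I=d(p\mathbb{A}+qI)=d\,\mathbb{A}^{2}$, using $d=p^{2}+4q$. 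This single quadratic identity is what forces the clean parity dichotomy in the statement.

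With the identity in hand the two cases follow immediately. If $n=2t$ is even, then $(\mathbb{A}^{2}+qI)^{n}=(d\,\mathbb{A}^{2})^{t}=d^{n/2}\mathbb{A}^{n}$, and applying this to $\begin{pmatrix}W_{k+1}\\W_{k}\end{pmatrix}$ and taking the second entry via (\ref{equation3.1}) yields $d^{n/2}W_{n+k}$. If $n=2t+1$ is odd, then $(\mathbb{A}^{2}+qI)^{n}=d^{t}\mathbb{A}^{2t}(\mathbb{A}^{2}+qI)=d^{(n-1)/2}(\mathbb{A}^{n+1}+q\mathbb{A}^{n-1})$; applying this to $\begin{pmatrix}W_{k+1}\\W_{k}\end{pmatrix}$ and reading off the second coordinate gives $d^{(n-1)/2}(W_{n+k+1}+qW_{n+k-1})$, which is the odd-case identity, with the final Horadam quaternion being $W_{n+k-1}$.

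The only genuine obstacle is establishing $(\mathbb{A}^{2}+qI)^{2}=d\,\mathbb{A}^{2}$; everything after it is routine bookkeeping with (\ref{equation3.1}). An alternative route to the same decomposition is spectral: $\mathbb{A}$ has eigenvalues $\alpha,\beta$ with $\alpha\beta=-q$ and $\alpha-\beta=\sqrt{d}$, so $\mathbb{A}^{2}+qI$ has eigenvalues $\sqrt{d}\,\alpha$ and $-\sqrt{d}\,\beta$, from which the even/odd behaviour is transparent. I would, however, favour the Cayley--Hamilton computation, since it is shorter and stays entirely within the matrix framework used throughout the paper.
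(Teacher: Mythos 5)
Your proof is correct and follows essentially the same route as the paper: both reduce the sum to the second entry of $(\mathbb{A}^{2}+qI)^{n}$ acting on the initial vector, and both derive the key identity $(\mathbb{A}^{2}+qI)^{2}=d\,\mathbb{A}^{2}$ from Cayley--Hamilton (the paper via $(\mathbb{A}^{2}-qI)^{2}=p^{2}\mathbb{A}^{2}$, you by expanding $(p\mathbb{A}+2qI)^{2}$ directly --- a cosmetic difference). You are also right that the odd case comes out as $W_{n+k+1}+qW_{n+k-1}$; the subscript $n-k-1$ in the stated theorem is a typo, as the paper's own proof and Corollary \ref{corollary3.1} confirm.
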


\begin{proof}
By (\ref{equation3.2}), we have 
\begin{align}
( \mathbb{A}^2 -qI )^2 &= p^2 \mathbb{A}^2.  \notag
\end{align}%
Hence, we have the following identity: 
\begin{align}
(\mathbb{A}^2 +qI)^2 &=(\mathbb{A}^2 -qI)^2 + 4q \mathbb{A}^2 = (p^2 +4q) 
\mathbb{A}^2.  \label{equation3.4}
\end{align}
If $n$ is even, i.e., $n=2m$ where $m$ is a fixed non-negative integer, by (%
\ref{equation3.4}), then we have 
\begin{align}
\mathbb{A}^k (\mathbb{A}^2 +qI)^n =\mathbb{A}^k (\mathbb{A}^2 +qI
)^{2m}&=(p^2 +4q)^m \mathbb{A}^{2m+k}  \notag \\
\sum_{j=0}^n \binom{n}{j} q^{n-j} \mathbb{A}^{2j+k} &= (p^2 +4q)^m \mathbb{A}%
^{2m+k}.  \label{equation3.5}
\end{align}%
If $n$ is odd, i.e., $n=2m+1$ where $m$ is a fixed non-negative integer, by (%
\ref{equation3.4}), then we have 
\begin{align}
\mathbb{A}^k (\mathbb{A}^2 +qI )^{n}=\mathbb{A}^k (\mathbb{A}^2+qI
)^{2m+1}&=(p^2 +4q)^m \mathbb{A}^{2m+k} (\mathbb{A}^2+qI)  \notag \\
\sum_{j=0}^n \binom{n}{j} q^{n-j} \mathbb{A}^{2j+k} &= (p^2+4q)^m (\mathbb{A}%
^{2m+k+2} +q \mathbb{A}^{2m+k} ).  \label{equation3.6}
\end{align}%
We obtain the desired result by (\ref{equation3.1}) and comparing the second
entries in the matrices on both sides of the equation.
\end{proof}

For the $(p,q)$-Fibonacci sequence $\{u_n\}=\{w_n (0,1;p,q)\}$ and the $%
(p,q) $-Lucas sequence $\{v_n\}=\{w_n(2,p;p,q)\}$, the following identity
can be shown easily by induction on $n$: 
\begin{equation*}
u_{n+1} +q u_{n-1} =v_n.
\end{equation*}%
Based on this identity, it is clear that we have the same relationship
between the $(p,q)$-Fibonacci quaternions $U_n$ and the $(p,q)$-Lucas
quaternions $V_n$ as follows: 
\begin{equation*}
U_{n+1} + q U_{n-1} = V_n.
\end{equation*}%
Hence, we obtain Theorem 2.5 in Ipek's paper \cite{ipek} as a corollary of
Theorem \ref{theorem3.3}. For the sake of completeness of this paper, we
state it as follows:

\begin{corollary}
\label{corollary3.1} Let $n$ and $k$ be non-negative integers. Then, 
\begin{equation*}
\sum_{j=0}^{n}\binom{n}{j}q^{n-j}U_{2j+k}=%
\begin{cases}
d^{\frac{n}{2}}U_{n+k}, & \text{if $n$ is even;} \\ 
d^{\frac{n-1}{2}}V_{n+k}, & \text{if $n$ is odd.}%
\end{cases}%
\end{equation*}
\end{corollary}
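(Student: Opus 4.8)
The plan is to obtain this corollary as a direct specialization of Theorem~\ref{theorem3.3}, followed by a single application of the relation $U_{n+1}+qU_{n-1}=V_n$ recorded just above the statement. The starting observation is that the $(p,q)$-Fibonacci quaternions $U_n$ are precisely the Horadam quaternions $W_n$ attached to the Horadam sequence $\{w_n(0,1;p,q)\}=\{u_n\}$, so every identity proved for the generic $W_n$ holds verbatim after replacing $W$ by $U$. In particular, Theorem~\ref{theorem3.3} applies to the $U_n$ with no change to its proof. For the even case there is then nothing further to do: the even branch of Theorem~\ref{theorem3.3}, read with $W=U$, gives $\sum_{j=0}^n \binom{n}{j} q^{n-j} U_{2j+k} = d^{n/2} U_{n+k}$, which is exactly the first line of the claimed formula.

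The only real work is the odd case. Writing $n=2m+1$, the odd branch of Theorem~\ref{theorem3.3} specialized to $U$ produces $d^{(n-1)/2}\left(U_{n+k+1}+qU_{n+k-1}\right)$, the two summands coming from the second entries of $\mathbb{A}^{2m+k+2}$ and $q\,\mathbb{A}^{2m+k}$ after they act on the vector with entries $U_1$ and $U_0$. I would then invoke the identity $U_{m+1}+qU_{m-1}=V_m$, applied at index $m=n+k$, to collapse $U_{n+k+1}+qU_{n+k-1}$ into $V_{n+k}$. This yields $d^{(n-1)/2}V_{n+k}$ and completes the odd case, matching the second line of the claimed formula.

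The main obstacle is bookkeeping rather than anything conceptual: one must check that the two terms produced by the odd branch of Theorem~\ref{theorem3.3} line up exactly as $U_{(n+k)+1}+q\,U_{(n+k)-1}$, so that the pairing identity for $V_{n+k}$ applies cleanly. In carrying this out I would read the second Horadam term in the odd branch of Theorem~\ref{theorem3.3} as $W_{n+k-1}$, since this is the index that the factor $q\,\mathbb{A}^{2m+k}$ in that proof actually yields; with that reading the two summands combine into a single $(p,q)$-Lucas quaternion for every $k$, whereas any other index would fail to close the argument for general $k$. Once this alignment is confirmed, the corollary follows immediately from Theorem~\ref{theorem3.3} and the displayed relation between $U_n$ and $V_n$.
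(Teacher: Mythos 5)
Your proof is correct and follows exactly the paper's route: specialize Theorem~\ref{theorem3.3} to $W=U$ (the even case is immediate) and collapse the odd case via the displayed relation $U_{n+k+1}+qU_{n+k-1}=V_{n+k}$. You are also right that the index $W_{n-k-1}$ in the printed odd branch of Theorem~\ref{theorem3.3} is a typo for $W_{n+k-1}$, as the term $q\,\mathbb{A}^{2m+k}$ in that theorem's proof confirms; with that correction the pairing into $V_{n+k}$ goes through for every $k$.
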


We state the last binomial-sum identity for the Horadam quaternions as
follows:

\begin{theorem}
\label{theorem3.4} Let $n$ and $k$ be non-negative integers. Then, 
\begin{align*}
\sum_{j=0}^n \binom{n}{j} (-1)^j q^{n-j} W_{2j+k} &= (-p)^n W_{n+k}.
\end{align*}
\end{theorem}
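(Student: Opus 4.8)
The plan is to reuse the matrix-method template employed throughout Section~\ref{section3}. The anchor is equation (\ref{equation3.1}), which lets us realize each Horadam quaternion $W_{2j+k}$ as the second entry of $\mathbb{A}^{2j+k}\left(\begin{smallmatrix} W_1 \\ W_0\end{smallmatrix}\right)$. Accordingly, I would first recast the entire left-hand side as the second entry of a single operator applied to the vector $\left(\begin{smallmatrix} W_1 \\ W_0\end{smallmatrix}\right)$, namely the matrix
\begin{equation*}
\sum_{j=0}^{n}\binom{n}{j}(-1)^{j}q^{n-j}\mathbb{A}^{2j+k}.
\end{equation*}
The goal then becomes to simplify this matrix into a scalar multiple of a single power of $\mathbb{A}$.

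The key algebraic step is to factor out $\mathbb{A}^{k}$ and recognize a binomial expansion. Writing $\mathbb{A}^{2j}=(\mathbb{A}^{2})^{j}$, the remaining sum $\sum_{j=0}^{n}\binom{n}{j}(-1)^{j}q^{n-j}(\mathbb{A}^{2})^{j}$ collapses by the binomial theorem (with $(-\mathbb{A}^{2})^{j}$ playing the role of the first term) into $(qI-\mathbb{A}^{2})^{n}$. The decisive simplification comes from the Cayley--Hamilton relation (\ref{equation3.2}), $\mathbb{A}^{2}=p\mathbb{A}+qI$, which gives $qI-\mathbb{A}^{2}=-p\mathbb{A}$ directly. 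Hence $(qI-\mathbb{A}^{2})^{n}=(-p)^{n}\mathbb{A}^{n}$, and the whole operator becomes $\mathbb{A}^{k}(-p)^{n}\mathbb{A}^{n}=(-p)^{n}\mathbb{A}^{n+k}$.

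Finally, I would apply the operator identity $\sum_{j=0}^{n}\binom{n}{j}(-1)^{j}q^{n-j}\mathbb{A}^{2j+k}=(-p)^{n}\mathbb{A}^{n+k}$ to the vector $\left(\begin{smallmatrix} W_1 \\ W_0\end{smallmatrix}\right)$ and read off the second entries using (\ref{equation3.1}): the left side yields $\sum_{j=0}^{n}\binom{n}{j}(-1)^{j}q^{n-j}W_{2j+k}$ while the right side yields $(-p)^{n}W_{n+k}$, which is exactly the claim.

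There is no genuine obstacle here; the proof is shorter and cleaner than that of Theorem~\ref{theorem3.3}. The only point worth flagging is that, unlike in Theorem~\ref{theorem3.3} (where $(\mathbb{A}^{2}+qI)^{2}$ produced the factor $d=p^{2}+4q$ and forced an even/odd case split), the combination $qI-\mathbb{A}^{2}$ factors as a \emph{single} scalar multiple of $\mathbb{A}$. This is precisely what removes the parity distinction and collapses the sum into one closed-form term, so the entire argument goes through in a single pass with no case analysis.
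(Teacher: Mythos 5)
Your proposal is correct and coincides with the paper's own argument: both expand $(qI-\mathbb{A}^{2})^{n}$ binomially, use the Cayley--Hamilton relation (\ref{equation3.2}) to rewrite it as $(-p\mathbb{A})^{n}$, and then apply the resulting operator identity to $\left(\begin{smallmatrix} W_1 \\ W_0\end{smallmatrix}\right)$ via (\ref{equation3.1}). No gaps; your remark about why the parity split of Theorem~\ref{theorem3.3} disappears here is an accurate observation.
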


\begin{proof}
By (\ref{equation3.2}), we have 
\begin{equation*}
( -\mathbb{A}^2 +qI )^n = (-p\mathbb{A})^n.
\end{equation*}%
Hence, we have 
\begin{equation*}
\mathbb{A}^k ( -\mathbb{A}^2 +qI )^n=\sum_{j=0}^n \binom{n}{j} (-1)^j
q^{n-j} \mathbb{A}^{2j+k} = (-p)^n \mathbb{A}^{n+k}.
\end{equation*}%
We obtained the desired result by (\ref{equation3.1}) and comparing the
second entries in the matrices on both sides of the equation.
\end{proof}

\begin{remark}
For the $(p,q)$-Fibonacci quaternions $U_{n}$, we obtain Theorem 2.6 in
Ipek's paper \cite{ipek} by Theorem \ref{theorem3.4}.
\end{remark}

\end{document}